\documentclass[11pt, a4paper]{amsart}
\usepackage{fullpage}
\usepackage{amssymb}
\usepackage{commath}
\usepackage{empheq}
\usepackage{amsmath, amsthm}
\usepackage[foot]{amsaddr}
\usepackage{multicol}

\usepackage{tikz-cd}
\usepackage{bbm}
\usepackage{url}

\newcommand{\p}{\partial}
\newcommand{\mr}{\mathrm}
\newcommand{\ol}{\overline}
\newcommand{\ul}{\underline}
\newcommand{\DSP}{\Delta S_{+}}
\renewcommand{\phi}{\varphi}
\newcommand{\HC}{\Delta H}
\newcommand{\DHP}{\Delta H_{+}}
\newcommand{\HB}{\mathsf{H}_A}
\newcommand{\HBP}{\mathsf{H}_{A+}}
\newcommand{\HBM}{\mathsf{H}_M}
\newcommand{\llb}{\left\lbrace}
\newcommand{\rrb}{\right\rbrace}
\newcommand{\llangle}{\left\langle}
\newcommand{\rrangle}{\right\rangle}
\newcommand{\RNum}[1]{\MakeUppercase{\romannumeral #1}}
\newcommand{\CM}{\mathbf{CMod}}
\newcommand{\MC}{\mathbf{ModC}}
\newcommand{\kmod}{\mathbf{Mod}_k}
\newcommand{\C}{\mathbf{C}}

\newtheorem{thm}{Theorem}[section]

\theoremstyle{plain}
\newtheorem{prop}[thm]{Proposition}
\newtheorem{lem}[thm]{Lemma}
\newtheorem{cor}[thm]{Corollary}

\theoremstyle{definition}
\newtheorem{defn}[thm]{Definition}

\theoremstyle{remark}
\newtheorem{rem}[thm]{Remark}
\newtheorem{eg}[thm]{Example}

\makeatletter
\@namedef{subjclassname@1991}{\textup{2020} Mathematics Subject Classification}
\makeatother

\title{E-infinity structure in hyperoctahedral homology}
\author{Daniel Graves}
\address{School of Mathematics, University of Leeds, Woodhouse, Leeds, LS2 9JT, UK}
\date{}


\begin{document}

\keywords{hyperoctahedral homology, crossed simplicial group, $E_{\infty}$-algebra, Dyer-Lashof operations, Pontryagin product}
\subjclass{55N35, 13D03, 18N70, 55S12, 55N45}

\maketitle

\begin{abstract}
Hyperoctahedral homology for involutive algebras is the homology theory associated to the hyperoctahedral crossed simplicial group. It is related to equivariant stable homotopy theory via the homology of equivariant infinite loop spaces. In this paper we show that there is an E-infinity algebra structure on the simplicial module that computes hyperoctahedral homology. We deduce that hyperoctahedral homology admits Dyer-Lashof homology operations. Furthermore, there is a Pontryagin product which gives hyperoctahedral homology the structure of an associative, graded-commutative algebra. We also give an explicit description of hyperoctahedral homology in degree zero. Combining this description and the Pontryagin product we show that hyperoctahedral homology fails to preserve Morita equivalence. 
\end{abstract}

\section*{Introduction}
\emph{Hyperoctahedral homology for involutive algebras} is the homology theory associated to the hyperoctahedral crossed simplicial group \cite[Section 3]{FL}. It was introduced by Fiedorowicz \cite[Section 2]{Fie} and developed by the author \cite{GravesHHA}. The hyperoctahedral crossed simplicial group is the largest of the \emph{fundamental} crossed simplicial groups, meaning that every other crossed simplicial group is either a subobject or an extension of a subobject \cite[Theorem 3.6]{FL}. It is therefore of interest to study the properties of the associated homology theory. It has applications in equivariant stable homotopy theory. In particular, the author showed \cite[Theorem 8.8]{GravesHHA} that for a group of odd order $G$ and a commutative ground ring $k$, there is an isomorphism of graded $k$-modules $HO_{\star}(k[G])\cong H_{\star}\left(\Phi \Omega_{C_2} Q_{C_2} BG\right)$ where $BG$ is the classifying space of the group, $Q_{C_2}$ is the $C_2$-equivariant free infinite loop space functor, $\Omega_{C_2}$ is the $C_2$-equivariant based loops functor and $\Phi$ is the $C_2$-fixed point functor.

In this paper we show that the simplicial $k$-module used to compute the hyperoctahedral homology of an involutive algebra has the structure of an \emph{$E_{\infty}$-algebra} in the category of simplicial $k$-modules.   

The notion of $E_{\infty}$-algebra first appeared in work of Boardman and Vogt \cite{BV} and was developed by May \cite{MayGILS} amongst others. An $E_{\infty}$-algebra structure on an object means that is has a multiplication operation that is both associative and commutative up to all higher homotopies. One important implication of an $E_{\infty}$-algebra structure is that it yields tools for calculation. We will show that the $E_{\infty}$-algebra structure on the simplicial $k$-module which calculates hyperoctahedral homology in this paper gives rise to two such tools: \emph{Dyer-Lashof homology operations} and a \emph{Pontryagin product}. 

Dyer-Lashof homology operations are an important tool for calculation in algebraic topology and homological algebra. They were introduced in \cite{DL} and used to calculate the homology of infinite loop spaces. Consider the isomorphism of graded $k$-modules $HO_{\star}(k[G])\cong H_{\star}\left(\Phi \Omega_{C_2} Q_{C_2} BG\right)$. When we take $k[G]=\mathbb{F}_p[\mathbb{Z}]$, the right hand side has an action of the Dyer-Lashof algebra. The result of this paper shows that the left hand side also has an action of the Dyer-Lashof algebra in this case. The author hopes that this is the first stage in showing that this isomorphism of graded $k$-modules is in fact more structured. In particular, it is hoped that it preserves the Dyer-Lashof actions. This is a non-trivial statement and should certainly be investigated in the case of symmetric homology first.

The notion of a Pontryagin product has its roots in \cite{Pontryagin}. In its simplest formulation it provides a graded product structure on the singular homology of topological spaces although there are many variants and generalizations. For example, there is a Pontryagin product on $H$-spaces \cite[3.C]{Hatcher}. The Pontryagin product in this paper gives hyperoctahedral homology the structure of an associative, graded-commutative algebra. This is a remarkable property that hyperoctahedral homology shares with symmetric homology: there is a graded-commutative structure on hyperoctahedral homology, even when the algebra under consideration is not commutative! In light of this it is natural to ask whether the homology theory associated to an arbitrary crossed simplicial group has the structure of a graded ring or a graded-commutative ring. However, in general the homology theory associated to a crossed simplicial group is not known to have any structure beyond that of a graded module.

The method of proof in this paper closely follows \cite{Ault-HO} in which an $E_{\infty}$-algebra structure is found for symmetric homology. Ault's proof has four main stages:
\begin{itemize}
\item construct an analogue of the \emph{Barratt-Eccles operad} in the category of small categories;
\item construct a left operad module, in the sense of \cite[2.1.6]{Fresse}, over this operad using under-categories of the symmetric category;
\item use this left operad module structure to deduce an $E_{\infty}$-algebra structure on an object closely related to the concept of a \emph{Schur functor} \cite[Definition 1.24]{MSS};
\item deduce that this induces an $E_{\infty}$-algebra structure on the simplicial $k$-module that computes symmetric homology. 
\end{itemize} 
We will see that hyperoctahedral homology shares many of the properties of symmetric homology. The keys to finding an $E_{\infty}$-algebra structure for hyperoctahedral homology are as follows. Firstly we define the \emph{tuple category} in Section \ref{tuple-section}. In some sense this can be seen as an ``involutive" version of the material in \cite[Section 2.2]{Ault-HO}. In Section \ref{op-mod-sec} we define a left operad module over the categorical version of the Barratt-Eccles operad using under-categories of the hyperoctahedral category. The main work in this section is identifying the correct functors required to verify the left operad module structure. These are found in Lemma \ref{iso-cat-lem} and Definition \ref{functors-defn}. Once these have been established, the technical details of the proofs of Lemma \ref{iso-cat-lem}, Lemma \ref{module-lem} and Theorem \ref{main-theorem} pass over from the symmetric case \emph{mutatis mutandis}.

The paper is organized as follows. In Section \ref{background-sec} we collate the background material that we will require for the remainder of the paper. In Subsection \ref{cat-subsec} we define the categories that we will use throughout the paper. In Subsection \ref{func-hom-subsec} we recall constructions from \emph{functor homology}, in particular the tensor product of functors, $\mathrm{Tor}$ functors over a small category and simplicial $k$-modules used to compute them. Subsection \ref{HO-subsec} recalls the definition of \emph{hyperoctahedral homology} for an involutive algebra, in terms of functor homology. We recall the \emph{hyperoctahedral category} and the \emph{hyperoctahedral bar construction}. Finally, in Subsection \ref{operad-subsec} we recall the necessary material from the theory of operads. We recall the notion of an \emph{$E_{\infty}$-operad in a symmetric monoidal model category}. As examples we recall analogues of the \emph{Barratt-Eccles operad} in the category of small categories and in the category of simplicial $k$-modules. We recall the definition of a \emph{left operad module} in the sense of \cite[2.1.6]{Fresse}. 

In Section \ref{tuple-section} we define the \emph{tuple category}. For an involutive monoid $M$, the tuple category is constructed from tuples of elements in $M$, morphisms in the hyperoctahedral category and the hyperoctahedral bar construction. We show that the tuple category is symmetric strict monoidal, giving it the structure of an $E_{\infty}$-algebra in the category of small categories.

In Section \ref{op-mod-sec} we construct a left operad module over the Barratt-Eccles operad in the category of small categories, using under-categories in the hyperoctahedral category. In Section \ref{op-alg-sec} we will use the left operad module structure to deduce that there is an $E_{\infty}$-algebra structure on the simplicial $k$-module which computes hyperoctahedral homology. As a corollary we obtain Dyer-Lashof homology operations and a Pontryagin product for hyperoctahedral homology.

In Section \ref{deg-zero-sec}, we give an explicit description of hyperoctahedral homology in degree zero. We prove that for an involutive $k$-algebra $A$, the degree zero hyperoctahedral homology is isomorphic to the quotient of $A$ by the ideal generated by the relation $a_0a_1a_2-a_2\ol{a_1}a_0$. In particular degree zero hyperoctahedral homology has a ring structure induced from the one in $A$. We show that this description of degree zero hyperoctahedral homology together with the Pontryagin product implies that hyperoctahedral homology does not preserve Morita equivalence.

\subsection*{Acknowledgements}
I am very grateful to Sarah Whitehouse for her helpful comments and suggestions. I would like to thank the anonymous referee for their suggestions and for their very interesting comments and questions.

\subsection*{Conventions}
Throughout the paper, $k$ will denote a unital commutative ground ring. All the algebras considered are unital.

\section{Background material}
\label{background-sec}
We begin this section by collating definitions and notation for the categories that we will use throughout the rest of the paper.  In Subsection \ref{func-hom-subsec} we recall the constructions from functor homology necessary to define hyperoctahedral homology in Subsection \ref{HO-subsec}. We recall the hyperoctahedral category and the hyperoctahedral bar construction. We recall the simplicial $k$-module used to compute hyperoctahedral homology, which we will later show bears the structure of an $E_{\infty}$-algebra. Finally in Subsection \ref{operad-subsec} we recall the required material on operad theory, specifically the notions of $E_{\infty}$-operad in a symmetric monoidal model category and the notion of a left operad module.  

\subsection{Categories}
\label{cat-subsec}
We begin by defining the categories that we will require throughout the paper.
\begin{defn}
The following categories will be used throughout the paper.
\begin{itemize}
\item Let $\mathbf{Cat}$ be the category of small categories and functors.
\item Let $\Delta$ be the category whose objects are the sets $[n]=\llb 0,\dotsc , n\rrb$ for $n\geqslant 0$ with order-preserving maps as morphisms \cite[B.1]{Lod}.
\item Let $\mathbf{sSet}$ denote the category of simplicial sets.
\item Let $\mathbf{Mod}_k$ and $\mathbf{sMod}_k$ denote the categories of $k$-modules and simplicial $k$-modules respectively.
\item Let $\mathbf{Top}$ denote the category of compactly-generated Hausdorff topological spaces and continuous maps.
\item For a finite group $G$, let $\mathbb{E}G$ denote the category whose objects are the elements $g\in G$ with a unique morphism $g_2g_1^{-1}\in \mathrm{Hom}_{\mathbb{E}G}\left(g_1,g_2\right)$. 
\item Let $\mathbf{IMon}$ denote the category of involutive monoids and involution-preserving morphisms.
\item Let $\mathbf{S}$ denote the symmetric groupoid. The objects are the sets $\ul{n}=\llb 1,\dotsc , n\rrb$ for $n\geqslant 0$ and $\mathrm{Hom}_{\mathbf{S}}\left(\ul{n},\ul{n}\right)=\Sigma_n$.
\item For a category $\mathbf{C}$ and an object $C\in \mathbf{C}$, we denote the under-category by $C\downarrow \mathbf{C}$, see \cite[\RNum{2}.6]{CWM} for instance.
\item Let $\llb \mathbf{C}_i\rrb$ be a set of small categories whose object sets are pairwise disjoint. Let $\coprod_i \mathbf{C}_i$ denote the category whose object set is $\coprod_i \mathrm{Ob}\left(\mathbf{C}_i\right)$ and whose set of morphisms is $\coprod_i \mathrm{Mor}\left(\mathbf{C}_i\right)$. 
\end{itemize} 
\end{defn} 

\subsection{Functor homology}
\label{func-hom-subsec}
Throughout the paper we will make use of concepts from \emph{functor homology}. We recall the necessary constructions from \cite{Pir02} and \cite{GZ}.

\begin{defn}
Let $\C$ be a small category. We define the \emph{category of left $\mathbf{C}$-modules}, denoted $\CM$, to be the functor category $\mathrm{Fun}\left(\mathbf{C},\kmod\right)$. We define the \emph{category of right $\mathbf{C}$-modules}, denoted $\MC$, to be the functor category $\mathrm{Fun}\left(\mathbf{C}^{op},\kmod\right)$.
\end{defn}

It is well-known that the categories $\CM$ and $\MC$ are abelian with enough projectives and injectives, see for example \cite[Section 1.6]{Pir02}.

\begin{defn}
\label{triv-c-mod}
Let $k^{\star}$ denote the right $\C$-module that is constant at the trivial $k$-module. We will refer to this functor as the \emph{$k$-constant right $\C$-module}.
\end{defn}

\begin{defn}
\label{tensor-obj}
Let $G$ be an object of $\MC$ and $F$ be an object of $\CM$. We define the tensor product $G\otimes_{\C} F$ to be the $k$-module
\[\frac{\bigoplus_{C\in \mathrm{Ob}(\C)} G(C)\otimes_k F(C)}{\llangle G(\alpha)(x)\otimes y - x\otimes F(\alpha)(y)\rrangle}\]
where $\llangle G(\alpha)(x)\otimes y - x\otimes F(\alpha)(y)\rrangle$ is the $k$-submodule generated by the set
\[\llb G(\alpha)(x)\otimes y - x\otimes F(\alpha)(y): \alpha \in \mathrm{Hom}(\C),\,\, x\in src(G(\alpha)),\,\, y\in src(F(\alpha))\rrb.\]
\end{defn}

This quotient module is spanned $k$-linearly by equivalence classes of elementary tensors in $\bigoplus_{C\in \mathrm{Ob}(\C)} G(C)\otimes_k F(C)$ which we will denote by $\left[x\otimes y\right]$. 
\begin{defn}
One constructs the tensor product of $\C$-modules as a bifunctor 
\[-\otimes_{\mathbf{C}}-\colon \MC \times \CM\rightarrow \kmod\]
on objects by $(G,F)\mapsto G\otimes_{\C} F$. Given two natural transformations $\Theta \in \mathrm{Hom}_{\MC}\left(G, G_1\right)$ and $\Psi\in \mathrm{Hom}_{\CM}\left(F,F_1\right)$, the morphism $\Theta \otimes_{\C} \Psi$ is determined by $\left[x\otimes y\right] \mapsto \left[ \Theta_C(x)\otimes \Psi_C(y)\right]$.
\end{defn}

It is well-known that the bifunctor $-\otimes_{\C} -$ is right exact with respect to both variables and preserves direct sums, see for example \cite[Section 1.6]{Pir02}. 

\begin{defn}
We denote the left derived functors of $-\otimes_{\mathbf{C}}-$ by $\mathrm{Tor}_{\star}^{\mathbf{C}}(-,-)$.
\end{defn}

Recall the nerve of a small category $\C$ \cite[B.12]{Lod}. $N_{\star}\C$ is the simplicial set such that $N_n\C$ for $n\geqslant 1$ consists of all strings of composable morphisms of length $n$ in $\C$ and $N_0\C$ is the set of objects in $\C$. The face maps are defined to either compose adjacent morphisms in the string or truncate the string and the degeneracy maps insert identity morphisms into the string. We will denote an element of $N_n\C$ by $\left(f_n , \dotsc , f_1\right)$ where $f_i\in \mathrm{Hom}_{\C}\left(C_{i-1}, C_i\right)$.

We will be particularly interested in the case where $G$ is a functor of the form $k\left[N_{n}\left(-\downarrow \mathbf{C}\right)\right]$. That is, we take the $n^{th}$ level of the nerve of an under-category and then take the free $k$-module on this set. We can view this construction as a resolution of the $k$-constant right $\mathbf{C}$-module $k^{\star}$ in the category $\mathbf{ModC}$. We observe that in this case $k\left[N_{\star}\left(-\downarrow \mathbf{C}\right)\right]\otimes_{\mathbf{C}} F$ has the structure of a simplicial $k$-module, where the face and degeneracy maps are induced from nerve construction, and the homology of this simplicial $k$-module is $\mathrm{Tor}_{\star}^{\mathbf{C}}\left(k^{\star}, F\right)$.

There is an isomorphic construction due to Gabriel and Zisman \cite[Appendix 2]{GZ}.

\begin{defn}
\label{GabZisCpx}
Let $F\in \CM$. We define
\[C_n(\C,F)=\bigoplus_{(f_n,\dotsc ,f_1)} F(C_0)\]
where the sum runs through all elements $(f_n,\dotsc ,f_1)$ of $N_n\C$ and $f_i\in \mr{Hom}_{\C}\left(C_{i-1}, C_i\right)$. We write a generator of $C_n(\C,F)$ in the form $(f_n,\dotsc, f_1,x)$ where $(f_n,\dotsc ,f_1)\in N_n\C$ indexes the summand and $x\in F(C_0)$. 
The face maps $\p_i\colon C_n(\C,F)\rightarrow C_{n-1}(\C,F)$ are determined by
\[
\p_i(f_n,\dotsc, f_1,x)=\begin{cases}
(f_n,\dotsc ,f_2, F(f_1)(x)) & i=0,\\
(f_n, \dotsc , f_{i+1}\circ f_i,\dotsc ,f_1,x) & 1\leqslant i \leqslant n-1,\\
(f_{n-1},\dotsc ,f_1, x) & i=n.
\end{cases}\]
The degeneracy maps insert identity maps into the string. We will denote the homology of the associated chain complex by $H_{\star}\left(\mathbf{C},F\right)$.
\end{defn}

\begin{rem}
As mentioned above, for a left $\mathbf{C}$-module $F$ there is an isomorphism of simplicial $k$-modules $C_{\star}\left(\mathbf{C},F\right)\cong k\left[N_{\star}\left(-\downarrow \mathbf{C}\right)\right]\otimes_{\mathbf{C}} F$. This is well-known; for example, details of this isomorphism are discussed in \cite[Section 1.3]{Ault} for specific choices of $\mathbf{C}$ and $F$.
\end{rem}

\subsection{Hyperoctahedral homology}
\label{HO-subsec}
Recall from \cite[Definition 1.1]{FL} that a family of groups $\llb G_n\rrb_{n\geqslant 0}$ is a \emph{crossed simplicial group} if there exists a small category $\Delta G$ such that: the objects of $\Delta G$ are the sets $[n]$ for $n\geqslant 0$; $\Delta G$ contains $\Delta$ as a subcategory; $\mathrm{Aut}_{\Delta G}([n])=G_n$; and any morphism in $\mathrm{Hom}_{\Delta G}\left([n] , [m]\right)$ can be written uniquely as a pair $\left(\phi , g\right)$ where $g\in G_n$ and $\phi \in \mathrm{Hom}_{\Delta}\left([n],[m]\right)$.

The final condition ensures that the category $\Delta G$ has a well defined composition, which is denoted by $\left(\psi, h\right)\circ\left(\phi, g\right)=\left(\psi\circ h_{\star}(\phi),\phi^{\star}(h)\circ g\right)$ where the morphisms $h_{\star}(\phi)$ and $\phi^{\star}(h)$ are determined by the crossed simplicial group $\llb G_n\rrb$.

\begin{defn}
For $n\geqslant 0$, the \emph{hyperoctahedral group} $H_{n+1}$ is defined to be the semi-direct product $C_2^{n+1}\rtimes \Sigma_{n+1}$, where $C_2=\llangle t\mid t^2=1\rrangle$ and $\Sigma_{n+1}$ acts on $C_2^{n+1}$ by permuting the factors.
\end{defn}

It is shown in \cite[3.3, 3.4]{FL} that the family of hyperoctahedral groups $\llb H_{n+1}\rrb_{n \geqslant 0}$ and the family of symmetric groups $\llb \Sigma_{n+1}\rrb_{n\geqslant 0}$ are crossed simplicial groups.

\begin{defn}
Let $\Delta S$ and $\Delta H$ denote the categories associated to the symmetric and hyperoctahedral crossed simplicial groups respectively. The composition laws in each category are determined by the relations given in \cite[E.6.1.7]{Lod} and \cite[Definition 1.2]{GravesHHA} respectively.
\end{defn}

\begin{rem}
Let $\Delta S_{+}$ and $\Delta H_{+}$ denote the categories formed from $\Delta S$ and $\Delta H$ by appending an initial object, which will be denoted by $[-1]$. Both $\Delta S_{+}$ and $\Delta H_{+}$ are symmetric strict monoidal categories, as proved in \cite[Proposition 9]{Ault} and \cite[Proposition 6.2]{GravesHHA} respectively.
\end{rem} 

\begin{rem}
Fiedorowicz and Loday provide a classification theorem for crossed simplicial groups \cite[Theorem 3.6]{FL}. This shows that there are seven \emph{fundamental crossed simplicial groups}. The largest of these is the hyperoctahedral crossed simplicial group; the remainder being subgroups of hyperoctahedral groups. Any other crossed simplicial group is an extension of a fundamental crossed simplicial group. From this perspective it is important to understand the properties of hyperoctahedral homology. However it also raises interesting questions. Fiedorowicz and Loday prove that any semi-direct extension of the symmetric groups is a crossed simplical group \cite[Theorem 3.10]{FL}. The author plans to investigate the homology theories associated to these crossed simplicial groups.
\end{rem}

\begin{defn}
Let $A$ be an involutive, associative $k$-algebra with the involution denoted by $a \mapsto \ol{a}$. The \emph{hyperoctahedral bar construction} is the functor $\HB\colon \Delta H\rightarrow \kmod$
given on objects by $[n]\mapsto A^{\otimes n+1}$ for $n\geqslant 0$. Let $(\phi ,g )\in \mr{Hom}_{\HC}\left([n],[m]\right)$. Then $\phi \in \mr{Hom}_{\Delta}\left([n] , [m]\right)$ and $g=\left( z_0,\dotsc , z_n; \sigma\right) \in H_{n+1}$. We define
\[\HB\left(\phi ,g\right)(a_0\otimes \cdots \otimes a_n)=\left( \underset{_{i\in (\phi\circ \sigma)^{-1}(0)}}{{\prod}} a_i^{z_i}\right)\otimes \cdots \otimes \left(\underset{_{i\in (\phi\circ \sigma)^{-1}(m)}}{{\prod}} a_i^{z_i} \right)\]
on elementary tensors and extend $k$-linearly, where the product is ordered according to the map $\phi$ and
\[a^{z_i}=
\begin{cases}
a & z_i=1\\
\ol{a} & z_i=t.
\end{cases}\]
Note that an empty product is defined to be the multiplicative unit $1_A$.
\end{defn}

\begin{defn}
Let $A$ be an involutive, associative algebra. For $n\geqslant 0$, we define the \emph{$n^{th}$ hyperoctahedral homology of $A$} to be
\[HO_{n}(A)\coloneqq\mr{Tor}_{n}^{\HC}\left(k^{\star},\mathsf{H}_A\right).\]
\end{defn}

\begin{rem} 
We recall from \cite[Definition 6.3]{GravesHHA} that the hyperoctahedral bar construction extends to a functor $\mathsf{H}_{A+}\in \mathrm{Fun}\left(\DHP , \mathbf{Mod}_k\right)$ by defining $\mathsf{H}_{A+}([-1])=k$ and, for $n\geqslant 0$ and $i_n\in \mathrm{Hom}_{\DHP}\left([-1],[n]\right)$, defining $\mathsf{H}_{A+}(i_n)$ to be the inclusion of $k$-algebras $k\rightarrow A^{\otimes (n+1)}$. Furthermore, by \cite[Theorem 6.4]{GravesHHA}, there is an isomorphism of graded $k$-modules $HO_{\star}(A)\cong H_{\star}\left(\DHP , \mathsf{H}_{A+}\right)$.
\end{rem}

\begin{defn}
Let $M$ be a monoid with involution. We define a functor $\HBM\colon \DHP\rightarrow \mathbf{IMon}$ on objects by
\[\HBM\left([n]\right)=\begin{cases}
M^{n+1} & n\geqslant 0\\
\emptyset & n=-1.
\end{cases}\]
where $M^{n+1}$ denotes the $(n+1)$-fold Cartesian product. $\HBM$ is defined on morphisms in $\HC$ analogously to $\HB$. We define $\HBM\left(i_n\right)$ to be the unique map $\emptyset \rightarrow M^{n+1}$. We call $\HBM$ the \emph{hyperoctahedral bar construction for monoids}.
\end{defn}

\subsection{Operad theory}
\label{operad-subsec}

Recall \cite[4.2.6]{Hovey} that a symmetric monoidal model category is category that bears both the structure of a symmetric monoidal category \cite[4.1.4]{Hovey} and of a model category \cite[1.1.3]{Hovey} subject to some compatibility conditions. 

\begin{eg}
We will be particularly interested in the following symmetric monoidal model categories.
\begin{itemize}
\item The category $\mathbf{sSet}$ with the level-wise Cartesian product and the Quillen model structure \cite[4.2.8]{Hovey}.
\item The category $\mathbf{sMod}_k$ with the level-wise tensor product of $k$-modules and the projective model structure \cite[\RNum{2}.4.12]{Quillen}. 
\item The category $\mathbf{Cat}$ with the product of categories and the Thomason model structure \cite{Thomason}.
\end{itemize}
\end{eg}   

\begin{defn}
Let $\mathbf{C}$ be a small symmetric monoidal model category with unit $\mathbbm{1}$. An \emph{$E_{\infty}$-operad} in $\mathbf{C}$ is a functor $\mathcal{O}\in\mathrm{Fun}\left(\mathbf{S}^{op}, \mathbf{C}\right)$ such that
\begin{itemize}
\item the conditions of an operad \cite[Definition 1.1]{MayGILS} are satisfied;
\item for each $n\geqslant 0$, $\mathcal{O}(n)$ is weakly equivalent to $\mathbbm{1}$ in the model structure;
\item for each $n\geqslant 0$, the action of $\Sigma_n$ on $\mathcal{O}(n)$ is free.
\end{itemize}
\end{defn}

\begin{eg}
Some interesting examples of $E_{\infty}$-operads in $\mathbf{Top}$ are the \emph{little $\infty$-cubes operad} \cite[Section 4]{MayGILS}, the \emph{Barratt-Eccles operad} \cite{BE}, \cite[1.1]{BeFr} and the \emph{hyperoctahedral operad} \cite[Definition 7.9]{GravesHHA}.
\end{eg}

\begin{rem}
Observe that if $\mathcal{O}$ is an $E_{\infty}$-operad in a symmetric monoidal model category $\mathbf{C}$ and $F\in \mathrm{Fun}\left(\mathbf{C},\mathbf{D}\right)$ is symmetric monoidal then there is an $E_{\infty}$-operad in $\mathbf{D}$ induced from $\mathcal{O}$ by $F$. We will be particularly interested in the categories $\mathbf{Cat}$, $\mathbf{sSet}$ and $\mathbf{sMod}_k$ with the nerve functor $N\in \mathrm{Fun}\left(\mathbf{Cat},\mathbf{sSet}\right)$ and the free $k$-module functor $k[-]\in \mathrm{Fun}\left(\mathbf{sSet}, \mathbf{sMod}_k\right)$.
\end{rem}

\begin{eg}
Following \cite[Example 2.7]{Ault-HO} we describe an analogue of the Barratt-Eccles operad in $\mathbf{Cat}$. The nerve functor then recovers a familiar description of the Barratt-Eccles operad in the category of simplicial sets and the free $k$-module functor induces an analogue of the Barratt-Eccles operad in the category $\mathbf{sMod}_k$. Following Ault, the operads in $\mathbf{Cat}$ and $\mathbf{sMod}_k$ will be denoted by $\mathcal{D}_{\mathbf{Cat}}$ and $\mathcal{D}_{\mathbf{Mod}}$ respectively.

For $m\geqslant 0$ let $\mathcal{D}_{\mathbf{Cat}}(m)=\mathbb{E}\Sigma_m$. Let $m,k_1,\dotsc , k_m\geqslant 0$ and let $k=\sum_{i=1}^m k_i$. The operad structure maps are the functors
\[\mathcal{D}_{\mathbf{Cat}}(m)\times \mathcal{D}_{\mathbf{Cat}}(k_1)\times \cdots \times \mathcal{D}_{\mathbf{Cat}}(k_m)\rightarrow \mathcal{D}_{\mathbf{Cat}}(k)\]
given on objects by 
\[\left(\sigma , \tau_1,\dotsc , \tau_m\right) \mapsto \tau_{\sigma^{-1}(1)}\times \cdots \times \tau_{\sigma^{-1}(m)}.\]
That is, we form the permutation where $\tau_i$ acts on a block of size $k_i$ and we permute the blocks according to the permutation $\sigma \in \Sigma_m$.

The right action of $\Sigma_m$ of $\mathbb{E}\Sigma_m$ is given by composition of group elements and is therefore a free action. The category $\mathbb{E}\Sigma_m$ is contractible since the nerve of the category $\mathbb{E}\Sigma_m$ is a simplicial model of the total space of $\Sigma_m$. Therefore $\mathcal{D}_{\mathbf{Cat}}$ is an $E_{\infty}$-operad in $\mathbf{Cat}$. 
\end{eg}

\begin{defn}
Let $\mathbf{C}$ be a symmetric monoidal model category and let $\mathcal{O}$ be an $E_{\infty}$-operad in $\mathbf{C}$. An $\mathcal{O}$-algebra \cite[Definition \RNum{2}.1.20]{MSS} is called an \emph{$E_{\infty}$-algebra in $\mathbf{C}$}.
\end{defn}

\begin{eg}
As shown in \cite[2.5]{Ault-HO}, a symmetric strict monoidal category has the structure of a $\mathcal{D}_{\mathbf{Cat}}$-algebra and is therefore an $E_{\infty}$-algebra in $\mathbf{Cat}$.
\end{eg}

\begin{defn}
Let $\left(\mathbf{C}, \otimes\right)$ be a symmetric monoidal category. Let $\mathcal{O}$ be an operad in $\mathbf{C}$. A \emph{left $\mathcal{O}$-module} is a functor $\mathcal{M}\in \mathrm{Fun}\left(\mathbf{S}^{op},\mathbf{C}\right)$ together with composition products
\[\mathcal{O}(n)\otimes \mathcal{M}(p_1)\otimes \cdots \otimes \mathcal{M}(p_n)\rightarrow \mathcal{M}\left(p_1+\cdots +p_n\right)\]
satisfying associativity, left unit and equivariance conditions analogous to those of an operad.
\end{defn}

\begin{prop}
\label{schur-prop}
Let $\left(\mathbf{C},\oplus , \otimes\right)$ be a cocomplete distributive symmetric monoidal category. Let $\mathcal{O}$ be an operad in $\mathbf{C}$. Let $\mathcal{M}$ be a left $\mathcal{O}$-module and let $Z$ be an object in $\mathbf{C}$. Then
\[\mathcal{M}\llangle Z\rrangle\coloneqq \bigoplus_{m\geqslant 0} \mathcal{M}\left(\ul{m}\right)\otimes_{\Sigma_m} Z^{\otimes m}\]
admits the structure of an $\mathcal{O}$-algebra.
\end{prop}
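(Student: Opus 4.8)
The plan is to build the $\mathcal{O}$-algebra structure maps on $\mathcal{M}\llangle Z\rrangle$ directly out of the left module composition products, following the standard recipe for producing algebras from left operad modules \cite[2.1.6]{Fresse}. An $\mathcal{O}$-algebra structure amounts to giving, for each $n\geqslant 0$, a morphism $\mathcal{O}(n)\otimes \mathcal{M}\llangle Z\rrangle^{\otimes n}\rightarrow \mathcal{M}\llangle Z\rrangle$ subject to the associativity, unit and equivariance axioms \cite[Definition \RNum{2}.1.20]{MSS}, so the task is to write down these maps and check the axioms.

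First I would expand the source using cocompleteness and distributivity. Since $\otimes$ distributes over $\oplus$, for each fixed $n$ there is a coherent isomorphism
\[
\mathcal{O}(n)\otimes \mathcal{M}\llangle Z\rrangle^{\otimes n} \cong \bigoplus_{p_1,\dotsc ,p_n\geqslant 0} \mathcal{O}(n)\otimes \bigotimes_{j=1}^{n}\left(\mathcal{M}\left(\ul{p_j}\right)\otimes_{\Sigma_{p_j}} Z^{\otimes p_j}\right).
\]
Using the symmetry isomorphisms of $\mathbf{C}$ I would reorder each summand so that the operad and module factors are gathered on the left and the copies of $Z$ on the right, obtaining up to coherent isomorphism
\[
\left(\mathcal{O}(n)\otimes \mathcal{M}\left(\ul{p_1}\right)\otimes \cdots \otimes \mathcal{M}\left(\ul{p_n}\right)\right)\otimes Z^{\otimes(p_1+\cdots +p_n)}.
\]
Applying the left module composition product to the first tensor factor lands in $\mathcal{M}\left(\ul{p}\right)\otimes Z^{\otimes p}$ with $p=p_1+\cdots +p_n$, and composing with the canonical projection onto the $\Sigma_p$-coinvariants gives a morphism into the summand $\mathcal{M}\left(\ul{p}\right)\otimes_{\Sigma_p} Z^{\otimes p}$ of $\mathcal{M}\llangle Z\rrangle$. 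Assembling over all tuples $(p_1,\dotsc ,p_n)$ defines the candidate structure morphism.

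It then remains to check that this is well defined and satisfies the operad-algebra axioms. The chief subtlety, which I expect to be the main obstacle, is well-definedness on the coinvariants: because each tensor factor of the source already records $\Sigma_{p_j}$-coinvariants, I must verify that the rearrange-and-compose map is invariant under the action of $\Sigma_{p_1}\times \cdots \times \Sigma_{p_n}$ and then descends to the $\Sigma_p$-coinvariants. This is precisely where the equivariance condition of the left module structure is used: a permutation acting on the factors $\mathcal{M}\left(\ul{p_j}\right)$ is carried by the composition product to the corresponding block permutation of $\mathcal{M}\left(\ul{p}\right)$, which matches the permutation of the factors of $Z^{\otimes p}$ induced by the symmetry, so the two actions are identified after projection. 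The associativity and unit axioms for the $\mathcal{O}$-algebra then follow formally from the associativity and left unit axioms of the module composition products, and the $\Sigma_n$-equivariance of the structure maps follows from the equivariance axiom for $\mathcal{M}$ together with the coherence of the symmetry isomorphisms; in each case the verification is a diagram chase that reduces to the corresponding module axiom. Throughout, cocompleteness guarantees that the relevant coproducts and coinvariant quotients exist, and distributivity guarantees that $\otimes$ commutes with them, so every construction above is legitimate in $\mathbf{C}$.
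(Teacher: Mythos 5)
Your proposal is correct and is essentially the paper's own argument: the paper simply cites \cite[Proposition 3.7]{Ault-HO} and notes that the proof is that of \cite[Proposition 1.25]{MSS} with the left module composition products substituted for the operad structure maps, which is precisely the construction you carry out. You have merely filled in the details (distributing $\otimes$ over $\oplus$, descending to $\Sigma$-coinvariants via the module equivariance axiom, and deducing the algebra axioms from the module axioms) that the cited references leave implicit.
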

\begin{proof}
This is stated as \cite[Proposition 3.7]{Ault-HO}. It can be proved analogously to \cite[Proposition 1.25]{MSS} using the left operad module structure maps in place of the operad structure maps.
\end{proof}

\section{The tuple category}
\label{tuple-section}
In this section we define the \emph{tuple category}. Given an involutive monoid $M$ we construct a category whose objects are tuples of elements in $M$ and whose morphisms are constructed from the hyperoctahedral category $\DHP$. We show that the tuple category is symmetric strict monoidal, making it an $E_{\infty}$-algebra in $\mathbf{Cat}$. This structure is key in the proof of Lemma \ref{module-lem}. 

\begin{defn}
\label{tup-cat-defn}
For an involutive monoid $M$ we define the \emph{tuple category of $M$}, denoted by $T(M)$ as follows. The objects of $T(M)$ are finite, possibly empty, tuples of elements in $M$. The empty tuple is denoted by $()$. For each morphism $f\in \mathrm{Hom}_{\DHP}\left([p-1],[q-1]\right)$ for $p,\,q\geqslant 0$ and $\mathbf{m}\in M^p$ there is a morphism $\left(f,\mathbf{m}\right)\in \mathrm{Hom}_{T(M)}\left(\mathbf{m}, \HBM(f)(\mathbf{m})\right)$. Composition is defined via the composition of morphisms in the category $\DHP$ and is well-defined by the functoriality of $\HBM$.
\end{defn}

\begin{defn}
We define a functor $T(-)\in \mathrm{Fun}\left(\mathbf{IMon},\mathbf{Cat}\right)$ on objects by the assignment of Definition \ref{tup-cat-defn}. Let $\mathbf{m}\in M^p$ and let $\left(f,\mathbf{m}\right)\in \mathrm{Hom}_{T(M)}\left(\mathbf{m} , \mathsf{H}_M(f)(\mathbf{m})\right)$. For a morphism $\phi \in \mathrm{Hom}_{\mathbf{IMon}}\left(M,N\right)$, $T(f)\in \mathrm{Hom}_{\mathbf{Cat}}\left(T(M),T(N)\right)$ is the functor defined on objects by $T(\phi)(\mathbf{m})=\phi(\mathbf{m})$ and on morphisms by $T(\phi)(f,\mathbf{m})=(f,\phi(\mathbf{m}))$, where $\phi$ is evaluated on the tuple $\mathbf{m}$ point-wise.
\end{defn}

\begin{lem}
\label{tuple-cat-lem}
Let $M$ be an involutive monoid. The tuple category $T(M)$ has the structure of a $\mathcal{D}_{\mathbf{Cat}}$-algebra. That is, it is an $E_{\infty}$-algebra in $\mathbf{Cat}$.
\end{lem}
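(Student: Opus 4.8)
The plan is to reduce the statement to the structural fact recalled earlier: since a symmetric strict monoidal category carries the structure of a $\mathcal{D}_{\mathbf{Cat}}$-algebra \cite[2.5]{Ault-HO}, it suffices to equip $T(M)$ with a symmetric strict monoidal structure. I would take the monoidal product to be concatenation of tuples on objects, with the empty tuple $()$ as the strict unit, and on morphisms I would use the symmetric strict monoidal structure already present on $\DHP$ \cite[Proposition 6.2]{GravesHHA}: given $(f,\mathbf{m})$ and $(f',\mathbf{m}')$, set $(f,\mathbf{m})\oplus(f',\mathbf{m}'):=(f\oplus f',\,\mathbf{m}\oplus\mathbf{m}')$, where $f\oplus f'$ denotes the block-sum of $f$ and $f'$ in $\DHP$ and $\mathbf{m}\oplus\mathbf{m}'$ the concatenation.

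The first thing to verify is that this defines a bifunctor $T(M)\times T(M)\to T(M)$. The essential point is that $\HBM$ is compatible with the two monoidal structures, i.e. that $\HBM(f\oplus f')(\mathbf{m}\oplus\mathbf{m}')=\HBM(f)(\mathbf{m})\oplus\HBM(f')(\mathbf{m}')$, so that the target of $(f\oplus f',\mathbf{m}\oplus\mathbf{m}')$ really is the concatenation of the two individual targets, and hence a legitimate object of $T(M)$. This should follow directly from the explicit block description of $\oplus$ on $\DHP$: the block-sum acts as $f$ on the first coordinates and as $f'$ on the last coordinates with no interaction between the two blocks, so the product formula defining $\HBM$ splits accordingly. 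Preservation of identities and composition then reduces to the functoriality of $\HBM$ together with the composition law in $\DHP$. Strict associativity and strict unitality are then immediate, since concatenation of tuples is strictly associative and strictly unital with respect to $()$ and the block-sum on $\DHP$ is strict.

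For the symmetry I would lift the symmetry isomorphism $c_{p,p'}$ of $\DHP$, defining the symmetry of $T(M)$ to be $(c_{p,p'},\mathbf{m}\oplus\mathbf{m}')$ for $\mathbf{m}\in M^p$ and $\mathbf{m}'\in M^{p'}$. Naturality and the hexagon and symmetry axioms should then descend from the corresponding identities in $\DHP$ via functoriality of $\HBM$. With the symmetric strict monoidal structure in place, the Example of \cite[2.5]{Ault-HO} supplies the $\mathcal{D}_{\mathbf{Cat}}$-algebra structure, and hence the $E_{\infty}$-structure in $\mathbf{Cat}$.

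The one genuinely involutive point, and the step I expect to require the most care, is checking that the lifted symmetry has the correct target: that $\HBM(c_{p,p'})(\mathbf{m}\oplus\mathbf{m}')=\mathbf{m}'\oplus\mathbf{m}$ rather than $\overline{\mathbf{m}'}\oplus\overline{\mathbf{m}}$. This is exactly where the choice of symmetry in $\DHP$ matters: as an element of the relevant hyperoctahedral group the block transposition $c_{p,p'}$ is a pure permutation with all $C_2$-components trivial, so applying $\HBM$ introduces no involutions and simply interchanges the two blocks. This is the feature that distinguishes the argument from the purely symmetric setting of \cite[Section 2.2]{Ault-HO}; once it is confirmed, the remaining verifications pass over \emph{mutatis mutandis}.
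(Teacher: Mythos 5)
Your proposal is correct and takes essentially the same route as the paper's proof: the paper also defines the product on $T(M)$ by concatenation of tuples and deduces the symmetric strict monoidal structure from the fact that $\DHP$ is symmetric strict monoidal under disjoint union, then invokes \cite[2.5]{Ault-HO} to obtain the $\mathcal{D}_{\mathbf{Cat}}$-algebra structure. Your writeup simply makes explicit the verifications the paper leaves implicit, in particular the compatibility of $\HBM$ with block sums and the observation that the block transposition has trivial $C_2$-components.
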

\begin{proof}
The product of two tuples in $T(M)$ is given by concatenation. The fact this product is symmetric strict monoidal follows from the fact $\DHP$ is symmetric strict monoidal under the disjoint union.
\end{proof}

\section{A left operad module}
\label{op-mod-sec}
In this section we will define a left operad module over $\mathcal{D}_{\mathbf{Cat}}$ in terms of under-categories of the hyperoctahedral groups. We will use the left operad module defined in this section, in conjunction with Proposition \ref{schur-prop}, to give an $E_{\infty}$-algebra structure on $C_{\star}\left(\DHP , \HBP\right)$, the simplicial $k$-module which computes hyperoctahedral homology.

\begin{defn}
\label{mod-func-defn}
We define a functor $\mathcal{I}\mathcal{F}(-)\in \mathrm{Fun}\left(\mathbf{S}^{op},\mathbf{Cat}\right)$ as follows. On objects we define $\mathcal{I}\mathcal{F}\left(\ul{m}\right)=[m-1]\downarrow \DHP$. For $\sigma\in \Sigma_m$ and $(\phi,g) \in [m-1]\downarrow \DHP$ we define $\mathcal{I}\mathcal{F}(\sigma)(\phi , g)=(\phi,g\circ \sigma)$.
\end{defn}

\begin{rem}
Henceforth, for the sake of neatness, we will omit the underline on an object when we apply the functor $\mathcal{I}\mathcal{F}$.
\end{rem} 

\begin{defn}
\label{lambda-defn}
Let $m,j_1,\dotsc , j_m\geqslant 0$ and let $j=\sum_{i=1}^m j_i$. For $1\leqslant i \leqslant m$ let $f_i\in \mathrm{Hom}_{\DHP}\left([j_i-1],[p_i-1]\right)$ and let $g_i\in \mathrm{Hom}_{\DHP}\left([p_i-1],[q_i-1]\right)$.

We define a family of functors
\[\lambda\colon \mathcal{D}_{\mathbf{Cat}}(m)\times \prod_{i=1}^m \mathcal{I}\mathcal{F}(j_i)\rightarrow \mathcal{I}\mathcal{F}(j)\]
on objects by
\[\lambda\left(\sigma, f_1,\dotsc , f_m\right) = f_{\sigma^{-1}(1)}\times \cdots \times f_{\sigma^{-1}(m)}.\]
That is, $f_i$ acts on a block of size $j_i$ and we permute the $m$ blocks according to $\sigma \in \Sigma_m$.
The functor $\lambda$ is defined on morphisms by
\[\lambda\left(\tau\sigma^{-1},g_1,\dotsc , g_m\right)\left(\sigma, f_1,\dotsc , f_m\right)=g_{\tau^{-1}(1)}f_{\tau^{-1}(1)}\times \cdots \times g_{\tau^{-1}(m)}f_{\tau^{-1}(m)}.\]
That is, we compose $g_i$ with $f_i$, set this to act on a block of size $j_i$ and then permute the blocks according to the permutation $\tau\in \Sigma_m$. The functoriality of $\lambda$ follows from the functoriality of the composition of morphisms in the under-category. 
\end{defn}

We will show in Lemma \ref{module-lem} that the structure from Definitions \ref{mod-func-defn} and \ref{lambda-defn} define the structure of a left $\mathcal{D}_{\mathbf{Cat}}$-module on $\mathcal{I}\mathcal{F}(-)$.

\begin{defn}
Let $X=\llb x_1, \ol{x_1}, x_2 , \ol{x_2},\dotsc\rrb$ be a set of formal indeterminates. Let $F(X)$ denote the free involutive monoid on $X$.
\end{defn}

\begin{rem}
Observe that the set $X$ has a $C_2$-action given by $x_i\mapsto \ol{x_i}$.
Recall that a category $\mathbf{C}$ is called \emph{discrete} if the only morphisms in $\mathbf{C}$ are identity morphisms. Note that for each $m\geqslant 0$ we can consider the Cartesian product $X^m$ as a discrete category. Furthermore, this category has a left action of the hyperoctahedral group $H_m$ given by applying the $C_2$-action and permuting the factors.
\end{rem}

\begin{rem}
Observe that for each $m\geqslant 0$ the category $\mathcal{I}\mathcal{F}(m)$ has a right action of the hyperoctahedral group $H_m$ defined by $\left(\phi , g\right)\bullet h=\left(\phi  , g\circ h\right)$.
\end{rem}

\begin{lem}
\label{iso-cat-lem}
There is an isomorphism of categories
\[E\colon \coprod_{m\geqslant 0} \left(\mathcal{I}\mathcal{F}(m) \times_{H_m} X^m\right)\rightarrow T(F(X))\]
induced from the evaluation maps $\mathcal{I}\mathcal{F}(m)\times X^m\rightarrow T(F(X))$ given by 
\[(f,y_1,\dotsc , y_m)\mapsto \mathsf{H}_M(f)(y_1,\dotsc , y_m).\]
\end{lem}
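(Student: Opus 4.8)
The plan is to construct $E$ explicitly from the stated evaluation maps and verify it is a well-defined functor, fully faithful, and bijective on objects. The key technical point is that the evaluation maps $\mathcal{IF}(m)\times X^m \to T(F(X))$, which send $(f,y_1,\dots,y_m)$ to the object $\mathsf{H}_M(f)(y_1,\dots,y_m)$ (and a morphism to the corresponding morphism $(f,\mathbf{y})$ in $T(F(X))$), are constant on the $H_m$-orbits, so they descend to the quotient $\mathcal{IF}(m)\times_{H_m} X^m$. First I would check this $H_m$-equivariance: the right $H_m$-action on $\mathcal{IF}(m)$ is $(\phi,g)\bullet h = (\phi, g\circ h)$ and the left $H_m$-action on the discrete category $X^m$ applies the $C_2$-action on entries and permutes factors. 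By the definition of $\mathsf{H}_M$ on morphisms of $\Delta H$, precomposing $f$ with $h^{-1}$ and simultaneously acting by $h$ on $(y_1,\dots,y_m)$ leaves $\mathsf{H}_M(f)(y_1,\dots,y_m)$ unchanged, since the factor permutation and the involutions $x_i \mapsto \overline{x_i}$ are exactly absorbed by the formula for $\mathsf{H}_M$. Thus $E$ is well-defined on the coproduct of the quotients.

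Next I would show $E$ is a functor and bijective on objects. An object of the source is an orbit $[(f,y_1,\dots,y_m)]$ with $f\in\mathrm{Hom}_{\Delta H_+}([m-1],[q-1])$, and $E$ sends it to the tuple $\mathsf{H}_M(f)(y_1,\dots,y_m)\in F(X)^q$. Surjectivity on objects is immediate: any tuple $(w_1,\dots,w_q)$ of words in $F(X)$ arises by choosing $m$ large enough to list all the letters appearing, taking $(y_1,\dots,y_m)$ to be those letters in order, and choosing $f$ in $\Delta H_+$ to group and orient them into the words $w_j$. Injectivity on objects is the statement that two representatives give the same tuple precisely when they lie in the same $H_m$-orbit; this is where the uniqueness built into the crossed simplicial group structure of $\Delta H$ (every morphism factors uniquely as $(\phi,g)$) does the bookkeeping, matching the free-monoid data of the target tuple against the morphism data of $\Delta H_+$.

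For fullness and faithfulness I would compare hom-sets directly. A morphism in $T(F(X))$ out of an object $\mathbf{m}\in F(X)^p$ is by Definition \ref{tup-cat-defn} a pair $(\alpha,\mathbf{m})$ with $\alpha\in\mathrm{Hom}_{\Delta H_+}([p-1],[r-1])$, and morphisms in the source category over a fixed orbit are governed by morphisms in the under-category $[m-1]\downarrow\Delta H_+$, i.e. by morphisms of $\Delta H_+$ compatible with the structure map. I would set up the evident correspondence between these two descriptions and check it respects composition, using that composition in $T(F(X))$ is defined through composition in $\Delta H_+$ and that composition in $\mathcal{IF}(m)$ is likewise inherited from the under-category; both sides are then manifestly identified by the functoriality of $\mathsf{H}_M$.

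I expect the main obstacle to be the injectivity-on-objects and faithfulness verifications, since these are exactly the points where one must confirm that no information is lost or spuriously identified when passing to the $H_m$-quotient: one needs that the $H_m$-action is precisely the ambiguity in representing a tuple of words by a list of letters together with a morphism of $\Delta H_+$, and that two morphisms of the under-category induce the same morphism of $T(F(X))$ only when they agree. As the authors note, once the correct functors are in place these checks pass over from Ault's symmetric case \emph{mutatis mutandis}, the only genuinely new ingredient being the compatibility of the $C_2$-involutions with the evaluation maps, which is handled by the $H_m$-equivariance step above.
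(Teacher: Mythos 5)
Your proposal is correct and follows essentially the same route as the paper's proof (which itself defers to Ault's symmetric-case argument): well-definedness on the $H_m$-quotient via the functoriality of $\mathsf{H}_M$, and invertibility via the unique complete factorization of words in the free involutive monoid $F(X)$. The only difference is packaging: the paper constructs the inverse functor explicitly from the factorization maps, whereas you verify bijectivity on objects together with fullness and faithfulness, which is equivalent and relies on exactly the same factorization and equivariance data.
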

\begin{proof}
The proof is analogous to \cite[Lemma 3.4]{Ault-HO}. One shows that the evaluation maps factor through $\mathcal{I}\mathcal{F}(m)\times_{H_m} X^m$ using the functoriality of $\mathsf{H} _M$. One then constructs the inverse functor, which is induced from the maps that completely factor the monomials in a tuple in $T(F(X))$.
\end{proof}

For the remainder of this section, fix $m,j_1,\dotsc , j_m\geqslant 0$ and let $j=\sum_{i=1}^m j_i$.

\begin{defn}
Let $\mathbf{y}=\left(y_1,\dotsc , y_j\right)\in X^j$. Define $\mathbf{y}_1=\left(y_1,\dotsc ,y_{j_1}\right)\in X^{j_1}$ and, for $2\leqslant i\leqslant m$, define $\mathbf{y}_i=\left(y_{j_1+\cdots +j_{i-1}+1},\dotsc , y_{j_1+\cdots +j_i}\right)\in X^{j_i}$.
\end{defn}

\begin{defn}
\label{functors-defn}
We define the functors we require to verify that $\mathcal{I}\mathcal{F}$ is a left $\mathcal{D}_{\mathbf{Cat}}$-module.
\begin{itemize}
\item For $1\leqslant i \leqslant m$, let $I_i\colon \mathcal{I}\mathcal{F}(j_i)\rightarrow \mathcal{I}\mathcal{F}(j_i)\times_{H_{j_i}} X^{j_i}$
be the inclusion functor $f\mapsto (f, \mathbf{y}_i)$.
\item Let $\mathbf{I}=I_1\times \cdots \times I_m$. 
\item Let $I\colon \mathcal{I}\mathcal{F}(j)\rightarrow \mathcal{I}\mathcal{F}(j) \times_{H_j} X^j$ be the inclusion functor $f\mapsto (f, \mathbf{y})$.
\item For $i\geqslant 0$, let 
\[J_i\colon \mathcal{I}\mathcal{F}(i)\times_{H_i} X^i\rightarrow \coprod_{i\geqslant 0} \left(\mathcal{I}\mathcal{F}(i)\times_{H_i} X^i\right)\]
be the inclusion of categories.
\item Let $\mathbf{J}=J_{j_1}\times \cdots \times J_{j_m}$.
\end{itemize}
\end{defn}

\begin{lem}
\label{module-lem}
The functors $\lambda$ of Definition \ref{lambda-defn} define a left $\mathcal{D}_{\mathbf{Cat}}$-module structure on the functor $\mathcal{I}\mathcal{F}(-)$ of Definition \ref{mod-func-defn}.
\end{lem}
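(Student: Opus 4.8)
The plan is to verify the three defining conditions of a left operad module for the composition products $\lambda$ of Definition \ref{lambda-defn}: associativity, the left unit condition, and equivariance. The essential point is that each of these is an equation between functors assembled from the under-categories $\mathcal{I}\mathcal{F}(m)=[m-1]\downarrow\DHP$, which make no reference to any monoid. Rather than checking the associativity constraint directly by tracking block permutations through nested composites, I would transport it from the symmetric strict monoidal, hence $\mathcal{D}_{\mathbf{Cat}}$-algebra, structure on the tuple category $T(F(X))$ supplied by Lemma \ref{tuple-cat-lem}. Since the free involutive monoid $F(X)$ introduces no relations among the indeterminates, evaluating on $F(X)$ loses no combinatorial information---this is exactly the content of the inverse to the isomorphism $E$ in Lemma \ref{iso-cat-lem}---so identities verified there hold as identities of the monoid-free functors $\lambda$.

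The left unit condition is immediate: the unit object of $\mathcal{D}_{\mathbf{Cat}}(1)=\mathbb{E}\Sigma_1$ is the trivial permutation, and $\lambda$ applied to a single block returns its input unchanged on objects and morphisms. The two equivariance conditions follow directly from the definitions. The right $\Sigma_m$-action on $\mathcal{D}_{\mathbf{Cat}}(m)$ is by group composition and permutes the $m$ blocks of $\lambda(\sigma,f_1,\dotsc,f_m)$, while the right action $\mathcal{I}\mathcal{F}(\tau)(\phi,g)=(\phi,g\circ\tau)$ of Definition \ref{mod-func-defn} operates within a single block; the compatibility of these with the block decomposition is visible from the formula for $\lambda$.

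The heart of the matter is associativity, and here I would exploit the functors $I_i,\mathbf{I},I,J_i,\mathbf{J}$ of Definition \ref{functors-defn} together with the isomorphism $E$. Writing $\theta$ for the $\mathcal{D}_{\mathbf{Cat}}$-algebra structure map on $T(F(X))$ (concatenation of tuples, reordered by $\sigma$), these functors are arranged precisely so that, on an object $(\sigma,f_1,\dotsc,f_m)$,
\[
\HBM\bigl(\lambda(\sigma,f_1,\dotsc,f_m)\bigr)(\mathbf{y}) \;=\; \theta\bigl(\sigma;\,\HBM(f_1)(\mathbf{y}_1),\dotsc,\HBM(f_m)(\mathbf{y}_m)\bigr),
\]
that is, evaluating the block-concatenated morphism of $\DHP$ on the tuple $\mathbf{y}$ coincides with concatenating the separately evaluated blocks in the order dictated by $\sigma$. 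This is the commuting square expressing that $E$, together with the inclusions $\mathbf{J}\circ\mathbf{I}$ and $I$, conjugates $\lambda$ into $\theta$. I would establish it on objects from the functoriality of $\HBM$ and on morphisms from the under-category composition law written out in Definition \ref{lambda-defn}.

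Granting the square, the module associativity of $\lambda$ is transported from the strict associativity of $\theta$, which holds because $T(F(X))$ is symmetric strict monoidal. Since $E$ is an isomorphism and the inclusion functors are compatible across arities, conjugation carries the operad-algebra associativity constraint for $\theta$ to the module associativity constraint for $\lambda$; as this is an equation of functors on the monoid-free categories $\mathcal{I}\mathcal{F}(m)$ tested on the universal example $F(X)$, it holds identically. The main obstacle is the morphism-level verification of the commuting square: one must confirm that the composition law $(\psi,h)\circ(\phi,g)$ of $\DHP$, once distributed across the $m$ blocks and permuted by $\sigma$, matches the strict concatenation product of $T(F(X))$ on the nose. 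Once this bookkeeping is in place the remaining implications are formal, exactly as in the symmetric case \cite{Ault-HO}.
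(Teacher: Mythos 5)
Your proposal follows essentially the same route as the paper: it transports the associativity (and other module axioms) of $\lambda$ from the $\mathcal{D}_{\mathbf{Cat}}$-algebra structure map $\theta$ on the tuple category $T(F(X))$ of Lemma \ref{tuple-cat-lem}, via the commutative diagram built from the isomorphism $E$ of Lemma \ref{iso-cat-lem} and the injective inclusion functors of Definition \ref{functors-defn}, exactly as in the paper's proof (which itself mirrors \cite[Diagram (16)]{Ault-HO}). Your identification of the morphism-level verification of this square as the main bookkeeping obstacle, and your treatment of the unit and equivariance conditions as routine, match the paper's argument.
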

\begin{proof}
Recall from Lemma \ref{tuple-cat-lem} that the tuple category $T(F(X))$ has the structure of a $\mathcal{D}_{\mathbf{Cat}}$-algebra. Therefore, for $m\geqslant 0$, we have operad algebra structure maps
\[\theta\colon \mathcal{D}_{\mathbf{Cat}}(m)\times T(F(X))^m \rightarrow T(F(X))\]
as defined in \cite[Section 2.5]{Ault-HO}.

Recall the isomorphism of categories from Lemma \ref{iso-cat-lem} and the functors of Definition \ref{functors-defn}.

Analogously to \cite[Diagram (16)]{Ault-HO}, there is a commutative diagram
\begin{center}
\begin{tikzcd}
\mathcal{D}_{\mathbf{Cat}}(m)\times \displaystyle\prod_{i=1}^m \mathcal{I}\mathcal{F}(j_i)\arrow[rrr, "\lambda "]\arrow[d, "{id \times \mathbf{I}}",swap]&&&\mathcal{I}\mathcal{F}(j)\arrow[d, "I"]\\
\mathcal{D}_{\mathbf{Cat}}(m) \times \displaystyle\prod_{i=1}^m \left(\mathcal{I}\mathcal{F}(j_i)\times_{H_{j_i}} X^{j_i}\right)\arrow[d, "{id \times \mathbf{J}}",swap]&&& \mathcal{I}\mathcal{F}(j)\times_{H_j} X^j \arrow[d, "J_j"]\\
\mathcal{D}_{\mathbf{Cat}}(m)\times \left[\displaystyle\coprod_{i\geqslant 0} \left(\mathcal{I}\mathcal{F}(i)\times_{H_i} X^i\right)\right]^{\times m}\arrow[d, "{id\times E^{\times m}}",swap]&&& \displaystyle\coprod_{i\geqslant 0} \left(\mathcal{I}\mathcal{F}(i)\times_{H_i} X^i\right)\arrow[d, "E"]\\
\mathcal{D}_{\mathbf{Cat}}(m)\times T(F(X))^m\arrow[rrr, "\theta ",swap] &&& T(F(X))
\end{tikzcd}
\end{center}
in $\mathbf{Cat}$.

The associativity condition of the functors $\lambda$ is induced from the associativity conditions of the operad algebra structure maps $\theta$, since all the vertical functors are injective. The left unit condition is satisfied straightforwardly. The equivariance conditions are satisfied similarly to the diagrams labelled \textbf{Equivariance A} and \textbf{Equivariance B} in \cite[Section 3.3]{Ault-HO}.
\end{proof}

\begin{defn}
\label{mod-module}
Let $\widetilde{\mathcal{I}\mathcal{F}}(-)\coloneqq k\left[N_{\star}\left(-\downarrow \DHP\right)\right]\in \mathrm{Fun}\left(\mathbf{S}^{op},\mathbf{sMod}_k\right)$.
\end{defn}

\begin{cor}
There is a left $\mathcal{D}_{\mathbf{Mod}}$-module structure on $\widetilde{\mathcal{I}\mathcal{F}}(-)$.
\end{cor}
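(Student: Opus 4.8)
The plan is to obtain this corollary as a formal consequence of Lemma \ref{module-lem} by transporting the left $\mathcal{D}_{\mathbf{Cat}}$-module structure on $\mathcal{I}\mathcal{F}(-)$ along the functor $k\left[N(-)\right]\colon \mathbf{Cat}\rightarrow \mathbf{sMod}_k$. The key observation is that both sides of the desired statement are already images under this functor: the operad $\mathcal{D}_{\mathbf{Mod}}$ is by construction $k\left[N\left(\mathcal{D}_{\mathbf{Cat}}\right)\right]$, and comparing Definition \ref{mod-module} with Definition \ref{mod-func-defn} shows that $\widetilde{\mathcal{I}\mathcal{F}}(-)=k\left[N_{\star}\left(\mathcal{I}\mathcal{F}(-)\right)\right]$ as functors $\mathbf{S}^{op}\rightarrow \mathbf{sMod}_k$. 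Thus the corollary is precisely the statement that a (strong) symmetric monoidal functor carries a left operad module to a left operad module, applied to $k\left[N(-)\right]$. This is the module-level analogue of the remark following the definition of an $E_{\infty}$-operad, which records that such a functor carries operads to operads.

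First I would verify that $k\left[N(-)\right]$ is strong symmetric monoidal from $\left(\mathbf{Cat},\times\right)$ to $\left(\mathbf{sMod}_k,\otimes\right)$. The nerve preserves products strictly, $N\left(\mathbf{C}\times \mathbf{D}\right)\cong N\mathbf{C}\times N\mathbf{D}$, and the free $k$-module functor converts the level-wise Cartesian product of simplicial sets into the level-wise tensor product, $k\left[S\times T\right]\cong k\left[S\right]\otimes k\left[T\right]$, compatibly with the symmetry; hence so does the composite. Applying this functor to the structure maps $\lambda$ of Definition \ref{lambda-defn} and using the monoidal comparison isomorphisms then produces composition products
\[\mathcal{D}_{\mathbf{Mod}}(m)\otimes \widetilde{\mathcal{I}\mathcal{F}}(j_1)\otimes \cdots \otimes \widetilde{\mathcal{I}\mathcal{F}}(j_m)\rightarrow \widetilde{\mathcal{I}\mathcal{F}}\left(j_1+\cdots +j_m\right)\]
of the required shape. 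The associativity, left unit and equivariance conditions for these maps follow by applying $k\left[N(-)\right]$ to the corresponding commutative diagrams from Lemma \ref{module-lem}, since a monoidal functor sends commutative diagrams of structure maps to commutative diagrams and the $\Sigma_m$-actions are preserved by functoriality.

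Because the argument is entirely formal, I do not expect a genuine obstacle: all of the substantive categorical work has already been carried out in Lemma \ref{module-lem}. The only point demanding care is the bookkeeping with the symmetry isomorphisms—one must check that the chosen monoidal comparison for $k\left[N(-)\right]$ is compatible with the symmetric structure, so that the equivariance conditions transport correctly—but this is exactly the coherence already invoked for the passage from $\mathcal{D}_{\mathbf{Cat}}$ to $\mathcal{D}_{\mathbf{Mod}}$ and needs no new input.
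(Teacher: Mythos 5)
Your proposal is correct and is essentially the paper's own proof: the paper likewise obtains the left $\mathcal{D}_{\mathbf{Mod}}$-module structure on $\widetilde{\mathcal{I}\mathcal{F}}(-)$ by transporting the $\mathcal{D}_{\mathbf{Cat}}$-module structure of Lemma \ref{module-lem} along the composite of the nerve functor and the free $k$-module functor. Your write-up simply makes explicit the points the paper leaves implicit, namely that this composite is strong symmetric monoidal, that $\mathcal{D}_{\mathbf{Mod}}=k\left[N\left(\mathcal{D}_{\mathbf{Cat}}\right)\right]$ and $\widetilde{\mathcal{I}\mathcal{F}}(-)=k\left[N_{\star}\left(\mathcal{I}\mathcal{F}(-)\right)\right]$, and that such a functor preserves the associativity, unit and equivariance diagrams.
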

\begin{proof}
The required structure is induced from the $\mathcal{D}_{\mathbf{Cat}}$-module structure on $\mathcal{I}\mathcal{F}(-)$ by the composite of the nerve functor and the free $k$-module functor.
\end{proof}

\section{E-infinity structure}
\label{op-alg-sec}
In this section we apply Proposition \ref{schur-prop} to obtain an $E_{\infty}$-algebra in the category of simplicial $k$-modules, constructed from the left $\mathcal{D}_{\mathbf{Mod}}$-module $\widetilde{\mathcal{I}\mathcal{F}}$ of Definition \ref{mod-module} and the hyperoctahedral bar construction. We prove that the simplicial $k$-module $C_{\star}\left(\DHP , \HBP\right)$ is a quotient of this and that the quotient map induces the structure of an $E_{\infty}$-algebra structure on $C_{\star}\left(\DHP , \HBP\right)$. As a corollary we observe that hyperoctahedral homology of an involutive algebra admits Dyer-Lashof operations and a Pontryagin product.

\begin{lem}
\label{DS-lem}
Let $A$ be an involutive $k$-algebra. The simplicial $k$-module $\widetilde{\mathcal{I}\mathcal{F}}(-)\otimes_{\mathrm{Aut}\DSP} \HBP$ admits the structure of an $E_{\infty}$-algebra in $\mathbf{sMod}_k$.
\end{lem}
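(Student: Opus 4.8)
The plan is to recognise the simplicial $k$-module in the statement as an instance of the Schur-type construction $\mathcal{M}\llangle Z\rrangle$ of Proposition \ref{schur-prop}, for a suitable choice of data, and then to read off the $E_{\infty}$-structure from that proposition. Concretely, I would take the symmetric monoidal model category $\mathbf{C}=\mathbf{sMod}_k$, the operad $\mathcal{O}=\mathcal{D}_{\mathbf{Mod}}$, the left operad module $\mathcal{M}=\widetilde{\mathcal{I}\mathcal{F}}$, and the object $Z=A$ regarded as a constant simplicial $k$-module. The point is that the extra involutive/hyperoctahedral data is already packaged into the module $\widetilde{\mathcal{I}\mathcal{F}}$ and into $\HBP$, so at this stage only the symmetric-group equivariance of Proposition \ref{schur-prop} is needed.

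First I would check the hypotheses of Proposition \ref{schur-prop}. The category $\mathbf{sMod}_k$, with the levelwise tensor product and direct sum, is cocomplete, distributive and symmetric monoidal; $\mathcal{D}_{\mathbf{Mod}}$ is an operad in it; and by the Corollary following Definition \ref{mod-module} the functor $\widetilde{\mathcal{I}\mathcal{F}}$ is a left $\mathcal{D}_{\mathbf{Mod}}$-module. Proposition \ref{schur-prop} then endows
\[\widetilde{\mathcal{I}\mathcal{F}}\llangle A\rrangle=\bigoplus_{m\geqslant 0}\widetilde{\mathcal{I}\mathcal{F}}(\ul{m})\otimes_{\Sigma_m}A^{\otimes m}\]
with the structure of a $\mathcal{D}_{\mathbf{Mod}}$-algebra, that is, of an $E_{\infty}$-algebra in $\mathbf{sMod}_k$.

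The only genuine content that remains is to identify this object with $\widetilde{\mathcal{I}\mathcal{F}}(-)\otimes_{\mathrm{Aut}\DSP}\HBP$. Since $\HBP([m-1])=A^{\otimes m}$, and the coend over the symmetric automorphism groupoid $\mathrm{Aut}\DSP=\coprod_{m}\Sigma_m$ unwinds in each degree to the $\Sigma_m$-coinvariants of $\widetilde{\mathcal{I}\mathcal{F}}(\ul{m})\otimes A^{\otimes m}$, the two expressions will agree degreewise once the two $\Sigma_m$-actions on $A^{\otimes m}$ are seen to coincide. This bookkeeping matching is the step I expect to be the main obstacle: one must confirm that the left $\Sigma_m$-action obtained by restricting $\HBP$ along the inclusion $\Sigma_m\hookrightarrow H_m=\mathrm{Aut}_{\DHP}([m-1])$ is exactly the permutation action on the tensor power used in the Schur-functor construction, with the correct handedness. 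This follows by specialising the defining formula for $\HBP(\phi,g)$ to $\phi=\mathrm{id}$ and $g=(1,\dotsc,1;\sigma)$, for which the involution data $a^{z_i}$ are all trivial and each fibre $(\phi\circ\sigma)^{-1}(j)$ is the single index $\sigma^{-1}(j)$, so that $\HBP(\mathrm{id},g)$ is precisely the permutation of tensor factors. Once this matching is in place the $E_{\infty}$-algebra structure transports immediately, and everything else is a direct appeal to Proposition \ref{schur-prop}.
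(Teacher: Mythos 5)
Your proposal is correct and follows essentially the same route as the paper: the paper's proof also identifies $\widetilde{\mathcal{I}\mathcal{F}}(-)\otimes_{\mathrm{Aut}\DSP} \HBP$ with $\widetilde{\mathcal{I}\mathcal{F}}\llangle A\rrangle = \bigoplus_{n\geqslant 0}\widetilde{\mathcal{I}\mathcal{F}}(n)\otimes_{\Sigma_n}A^{\otimes n}$ and then applies Proposition \ref{schur-prop} with $\mathbf{C}=\mathbf{sMod}_k$, $\mathcal{O}=\mathcal{D}_{\mathbf{Mod}}$, $\mathcal{M}=\widetilde{\mathcal{I}\mathcal{F}}(-)$ and $Z=A$. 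Your explicit verification that the restricted $H_m$-action of $\HBP$ along $\Sigma_m\hookrightarrow H_m$ is the standard permutation of tensor factors is a detail the paper leaves implicit in its identification, but it is the same argument.
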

\begin{proof}
One identifies 
\[\widetilde{\mathcal{I}\mathcal{F}}(-)\otimes_{\mathrm{Aut}\DSP} \HBP = \bigoplus_{n\geqslant 0} \widetilde{\mathcal{I}\mathcal{F}}(n) \otimes_{\Sigma_n} A^{\otimes n}= \widetilde{\mathcal{I}\mathcal{F}}\llangle A\rrangle.\]
The result now follows from Proposition \ref{schur-prop} where we set $\mathbf{C}=\mathbf{sMod}_k$, $\mathcal{O}=\mathcal{D}_{\mathbf{Mod}}$, $\mathcal{M}=\widetilde{\mathcal{I}\mathcal{F}}(-)$ and $Z=A$.
\end{proof}

\begin{defn}
\label{Quotient-defn}
Let $q\colon \widetilde{\mathcal{I}\mathcal{F}}\llangle A\rrangle \rightarrow C_{\star}\left(\DHP , \HBP\right)$ denote the quotient map of simplicial $k$-modules induced from the inclusion of categories $\mathrm{Aut}\DSP\hookrightarrow \DSP \hookrightarrow \DHP$.
\end{defn}

\begin{defn}
\label{structure-map-defn}
We denote by
\[\nu\colon \mathcal{D}_{\mathbf{Mod}}(m) \otimes_{\Sigma_m} \widetilde{\mathcal{I}\mathcal{F}}\llangle A\rrangle^{\otimes m}\rightarrow \widetilde{\mathcal{I}\mathcal{F}}\llangle A\rrangle\]
the operad algebra structure maps implied by Lemma \ref{DS-lem}.
\end{defn}

\begin{thm}
\label{main-theorem}
Let $A$ be an involutive $k$-algebra. The simplicial $k$-module $C_{\star}\left(\DHP , \HBP\right)$, which computes the hyperoctahedral homology $HO_{\star}(A)$, bears the structure of an $E_{\infty}$-algebra in the category $\mathbf{sMod}_k$.
\end{thm}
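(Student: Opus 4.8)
The plan is to transport the $E_{\infty}$-algebra structure established in Lemma \ref{DS-lem} across the quotient map $q$ of Definition \ref{Quotient-defn}. By Lemma \ref{DS-lem} we already have an $E_{\infty}$-algebra structure on $\widetilde{\mathcal{I}\mathcal{F}}\llangle A\rrangle$ with structure maps $\nu$ of Definition \ref{structure-map-defn}, and $q$ is a surjection of simplicial $k$-modules onto the target $C_{\star}\left(\DHP , \HBP\right)$. The first step is to verify that $q$ is in fact well-defined and surjective: it is induced from the inclusion of categories $\mathrm{Aut}\DSP \hookrightarrow \DSP \hookrightarrow \DHP$, so passing to nerves, free $k$-modules, and the coend $-\otimes_{-}\HBP$ yields the claimed map.

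Second, and this is the heart of the argument, I would show that the operad action $\nu$ descends through $q$. Concretely, I want to exhibit dashed maps $\bar{\nu}$ making the square
\begin{center}
\begin{tikzcd}
\mathcal{D}_{\mathbf{Mod}}(m) \otimes_{\Sigma_m} \widetilde{\mathcal{I}\mathcal{F}}\llangle A\rrangle^{\otimes m}\arrow[r, "\nu"]\arrow[d, "{id\otimes q^{\otimes m}}",swap]&\widetilde{\mathcal{I}\mathcal{F}}\llangle A\rrangle\arrow[d, "q"]\\
\mathcal{D}_{\mathbf{Mod}}(m) \otimes_{\Sigma_m} C_{\star}\left(\DHP , \HBP\right)^{\otimes m}\arrow[r, "{\bar{\nu}}",swap] & C_{\star}\left(\DHP , \HBP\right)
\end{tikzcd}
\end{center}
commute. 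Since $q$ is surjective, the map $\bar{\nu}$ is unique if it exists, so the only thing to check is that $q\circ \nu$ factors through $id\otimes q^{\otimes m}$; equivalently, that $q\circ \nu$ annihilates the kernel of $id\otimes q^{\otimes m}$. This is where the structure of $q$ as the map induced by $\mathrm{Aut}\DSP \hookrightarrow \DSP \hookrightarrow \DHP$ is used: the relations imposed in passing from $\widetilde{\mathcal{I}\mathcal{F}}\llangle A\rrangle$ to $C_{\star}\left(\DHP , \HBP\right)$ are precisely those coming from the extra non-automorphism morphisms of $\DHP$ and the full symmetric-to-hyperoctahedral inclusion, and one checks that $\nu$, being built from the concatenation product on under-categories of $\DHP$ (Definition \ref{lambda-defn}), respects exactly these relations. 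Following Ault \emph{mutatis mutandis}, the compatibility is a diagram chase using the explicit description of $\nu$ on generators.

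Once $\bar\nu$ is produced, the final step is bookkeeping: one verifies that the $\bar\nu$ inherit the operad-algebra axioms (associativity, unit, equivariance) from those of $\nu$. Because $q$ and $id\otimes q^{\otimes m}$ are surjective, each axiom for $\bar\nu$ follows from the corresponding axiom for $\nu$ by surjectivity—every relation that holds upstairs pushes forward, and every element downstairs lifts. This makes $C_{\star}\left(\DHP , \HBP\right)$ a $\mathcal{D}_{\mathbf{Mod}}$-algebra, hence an $E_{\infty}$-algebra in $\mathbf{sMod}_k$.

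\textbf{The main obstacle} I anticipate is the factorization in the second step: showing that $q\circ\nu$ kills $\ker(id\otimes q^{\otimes m})$. One must track carefully how the concatenation-based action $\nu$ interacts with the identification of generators under the passage from $\DSP$-automorphisms to all of $\DHP$, and confirm that applying the hyperoctahedral bar construction $\HBP$ to the relevant morphisms sends relations to relations. The paper signals that this pass over from the symmetric case \cite{Ault-HO} is routine once Lemmas \ref{iso-cat-lem} and \ref{module-lem} are in place, so I would lean on that correspondence rather than re-deriving the entire compatibility from scratch.
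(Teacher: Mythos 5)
Your proposal matches the paper's proof: the paper likewise establishes Theorem \ref{main-theorem} by showing, analogously to Ault's symmetric-homology argument, that the structure maps $\nu$ of Definition \ref{structure-map-defn} and the quotient map $q$ of Definition \ref{Quotient-defn} fit into exactly the commutative square you draw, so that $\nu$ descends to a well-defined $\mathcal{D}_{\mathbf{Mod}}$-algebra structure on $C_{\star}\left(\DHP , \HBP\right)$. Your additional remarks on surjectivity forcing uniqueness of the descended maps and on the axioms pushing forward along the quotient are consistent with, and slightly more explicit than, what the paper leaves implicit.
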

\begin{proof}
Analogously to \cite[Lemma 3.10]{Ault-HO}, one can show that the $E_{\infty}$-algebra structure maps $\nu$ of Definition \ref{structure-map-defn} and the quotient map $q$ of Definition \ref{Quotient-defn} are compatible in the sense that the diagram
\begin{center}
\begin{tikzcd}
\mathcal{D}_{\mathbf{Mod}}(m)\otimes_{\Sigma_m} \widetilde{\mathcal{I}\mathcal{F}}\llangle A\rrangle^{\otimes m}\arrow[rr, "\nu "]\arrow[d, "{id\otimes q^{\otimes m}}", swap] && \widetilde{\mathcal{I}\mathcal{F}}\llangle A\rrangle \arrow[d, "q"]\\
\mathcal{D}_{\mathbf{Mod}}\otimes_{\Sigma_m}  C_{\star}\left(\DHP , \HBP\right)^{\otimes m}\arrow[rr, "\nu ", swap] && C_{\star}\left(\DHP , \HBP\right)
\end{tikzcd}
\end{center}
commutes. That is, the structure map $\nu$ remains well defined upon passing to the quotient and we deduce that $C_{\star}\left(\DHP , \HBP\right)$ has the structure of a $\mathcal{D}_{\mathbf{Mod}}$-algebra.
\end{proof}

\begin{cor}
\label{Main-cor}
Let $A$ be an involutive $k$-algebra.
\begin{enumerate}
\item When $k=\mathbb{F}_p$ for a prime $p$, the hyperoctahedral homology $HO_{\star}(A)$ admits Dyer-Lashof homology operations.
\item For any commutative ground ring $k$, $HO_{\star}(A)$ admits a Pontryagin product, giving it the structure of a unital, associative, graded-commutative algebra.
\end{enumerate} 
\end{cor}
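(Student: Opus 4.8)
The plan is to deduce both statements directly from the $E_\infty$-algebra structure established in Theorem \ref{main-theorem}, by transporting standard consequences of $E_\infty$-structures through the homology functor. Theorem \ref{main-theorem} gives that $C_{\star}(\DHP,\HBP)$ is an $E_\infty$-algebra in $\mathbf{sMod}_k$, and by the remark following the definition of the hyperoctahedral bar construction its homology is precisely $HO_{\star}(A)$. The whole point is that an $E_\infty$-algebra in simplicial $k$-modules carries, on passing to homology, exactly the two algebraic structures asserted. Since Theorem \ref{main-theorem} has already done the structural work, the corollary should be a short deduction citing the appropriate general machinery.

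For part (1), I would argue as follows. Applying the normalized chains (Dold–Kan) functor turns the simplicial $k$-module $C_{\star}(\DHP,\HBP)$ into an $E_\infty$-chain-algebra over $k=\mathbb{F}_p$. The homology of an $E_\infty$-algebra over $\mathbb{F}_p$ is a module over the Dyer–Lashof algebra: this is the classical theory of \cite{DL}, in the algebraic form developed for $E_\infty$-algebras. Concretely, the operad $\mathcal{D}_{\mathbf{Mod}}$ is an $E_\infty$-operad, so its arity-$p$ piece with its $\Sigma_p$-action supplies, after passing to $\Sigma_p$-homotopy orbits, the operations $Q^i$ acting on $H_{\star}$ in the standard way. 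Because hyperoctahedral homology is exactly $H_{\star}$ of this $E_\infty$-algebra, these operations act on $HO_{\star}(A)$. I would mirror \cite[Corollary 3.11]{Ault-HO} here, where the identical deduction is made for symmetric homology; the argument passes over verbatim once Theorem \ref{main-theorem} is in hand.

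For part (2), the Pontryagin product is the more elementary consequence: any $E_\infty$-algebra has in particular an underlying $E_1$ (in fact $E_\infty$) multiplication, and on homology the induced product is associative and, because the multiplication is commutative up to coherent homotopy, graded-commutative. Explicitly, the degree-two piece $\mathcal{D}_{\mathbf{Mod}}(2)$ together with the structure map $\nu$ of Definition \ref{structure-map-defn} induces a chain-level product whose homotopy-commutativity, witnessed by the contractibility of $\mathcal{D}_{\mathbf{Cat}}(2)=\mathbb{E}\Sigma_2$, descends to strict graded-commutativity on homology; the unit comes from the arity-zero part of the operad. This gives $HO_{\star}(A)$ the structure of a unital, associative, graded-commutative $k$-algebra. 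Again the formal content is identical to the symmetric case treated by Ault.

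The only point requiring genuine care is that the Dyer–Lashof operations of part (1) and the graded-commutativity of part (2) are statements about the \emph{homology} of the $E_\infty$-algebra, not about the simplicial object itself, so I must invoke the correct general theorem identifying $H_{\star}$ of an $E_\infty$-$\mathbb{F}_p$-algebra as an algebra over the Dyer–Lashof algebra and the Pontryagin ring. The main (indeed only) obstacle is therefore bookkeeping: ensuring the $E_\infty$-operad $\mathcal{D}_{\mathbf{Mod}}$ satisfies the hypotheses of the cited operational machinery (freeness of the $\Sigma_n$-actions and weak equivalence of each $\mathcal{D}_{\mathbf{Mod}}(n)$ to the unit, both already recorded in the excerpt) so that the standard operation-construction applies without modification. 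No new computation is needed beyond what Theorem \ref{main-theorem} supplies.
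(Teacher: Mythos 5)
Your proposal is correct and follows essentially the same route as the paper: part (1) is deduced from Theorem \ref{main-theorem} together with the standard Dyer--Lashof machinery for $E_\infty$-algebras over $\mathbb{F}_p$ (the paper cites \cite[Chapter \RNum{1} Theorem 1.1]{CLM}), and part (2) is obtained exactly as you describe, by composing the inclusion $C_{\star}\left(\DHP , \HBP\right)^{\otimes 2}\hookrightarrow \mathcal{D}_{\mathbf{Mod}}(2)\otimes_{\Sigma_2} C_{\star}\left(\DHP , \HBP\right)^{\otimes 2}$ with the structure map $\nu$, with unitality coming from the operad being unital. Your additional remarks (Dold--Kan normalization, contractibility of $\mathbb{E}\Sigma_2$ forcing graded-commutativity on homology) merely make explicit what the paper leaves to the cited references.
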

\begin{proof}
The first part of the corollary follows directly from Theorem \ref{main-theorem} and \cite[Chapter \RNum{1} Theorem 1.1]{CLM}. The second part of the corollary follows by defining the product
\[C_{\star}\left(\DHP , \HBP\right)^{\otimes 2}\hookrightarrow \mathcal{D}_{\mathbf{Mod}}(2)\otimes_{\Sigma_2} C_{\star}\left(\DHP , \HBP\right)^{\otimes 2}\xrightarrow{\nu} C_{\star}\left(\DHP , \HBP\right)\]
where $\nu$ is the $\mathcal{D}_{\mathbf{Mod}}$-algebra structure map from Theorem \ref{main-theorem}. The fact that the product is unital follows from the fact that our analogues of the Barratt-Eccles operad are unital operads.
\end{proof}

\section{Hyperoctahedral homology in degree zero}
\label{deg-zero-sec}
In this section we give an explicit description of hyperoctahedral homology in degree zero. We will show that for a unital involutive $k$ algebra $A$, $HO_0(A)$ is isomorphic to the quotient of $A$ by the ideal $\left(a_0a_1a_2-a_2\ol{a_1}a_0\right) $. In particular, we deduce that $HO_0(A)$ has a ring structure.

The material in this section is best presented using an isomorphic variant of the category $\Delta H$, namely the category of involutive non-commutative sets $\mathcal{I}\mathcal{F}(as)$. The isomorphism of \cite[Theorem 1.7]{GravesHHA} tells that a morphism in $\mathrm{Hom}_{\Delta H} \left( [n],[m]\right)$ can be uniquely described as a map of sets $f\colon [n]\rightarrow [m]$  such that for each element $i\in [m]$ the preimage $f^{-1}(i)$ has a total ordering and each element comes adorned with a superscript label from the group $C_2$. We call such a set a \emph{$C_2$-labelled ordered set}. We also recall that a $C_2$-labelled ordered set $X$ has a $C_2$-action, $X^t$ that reverses the total ordering and multiplies each of the labels by $t\in C_2$.

We will require two maps in particular.

\begin{defn}
For each $n\geqslant 0$, let $\mu_n \in \mathrm{Hom}_{\mathcal{I}\mathcal{F}(as)}\left([n],[0]\right)$ be defined by \[\mu_n^{-1}(0)=\llb 0^1<1^1<\cdots <n^1\rrb.\]
\end{defn}

\begin{defn}
Let $\nu \in \mathrm{Hom}_{\mathcal{I}\mathcal{F}(as)}\left([2],[0]\right)$ be defined by $\nu^{-1}(0)=\llb 2^1<1^t<0^1\rrb$.
\end{defn}

We begin by constructing a partial resolution of the $k$-constant right $\mathcal{I}\mathcal{F}(as)$-module $k^{\star}$.

\begin{lem}
\label{ESLem}
For each $n\geqslant 0 $ there is an exact sequence of $k$-modules
\[0\leftarrow k\xleftarrow{\varepsilon} k\left[\mathrm{Hom}_{\mathcal{I}\mathcal{F}(as)}\left([n],[0]\right)\right] \xleftarrow{\rho} k\left[\mathrm{Hom}_{\mathcal{I}\mathcal{F}(as)}\left([n],[2]\right)\right]\]
where $\varepsilon$ is defined by $\varepsilon(f)=1_k$ on generators and extended $k$-linearly. The map $\rho$ is determined on generators by $\rho(g)=\mu_2\circ g - \nu \circ g$ and extended $k$-linearly.
\end{lem}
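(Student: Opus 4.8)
The plan is to read off the two composites explicitly and then reduce the whole statement to a connectivity assertion. First I would record that a morphism $g\in\mathrm{Hom}_{\mathcal{I}\mathcal{F}(as)}([n],[2])$ is exactly an ordered, $C_2$-labelled partition of $[n]$ into three (possibly empty) blocks $B_0=g^{-1}(0)$, $B_1=g^{-1}(1)$, $B_2=g^{-1}(2)$, while a morphism in $\mathrm{Hom}_{\mathcal{I}\mathcal{F}(as)}([n],[0])$ is just a $C_2$-labelled total order of $[n]$. Using the composition rule for $C_2$-labelled ordered sets together with the definitions of $\mu_2$ and $\nu$, the composite $\mu_2\circ g$ is the total order $B_0B_1B_2$, whereas $\nu\circ g$ is $B_2\,\ol{B_1}\,B_0$, where $\ol{B_1}$ denotes the $C_2$-action on $B_1$ (reverse the order and multiply every label by $t$). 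Hence $\rho(g)=[B_0B_1B_2]-[B_2\,\ol{B_1}\,B_0]$ is a difference of two single generators, so $\varepsilon\rho(g)=1_k-1_k=0$ and $\mathrm{im}\,\rho\subseteq\ker\varepsilon$; surjectivity of $\varepsilon$ is immediate. The only real content is the reverse inclusion $\ker\varepsilon\subseteq\mathrm{im}\,\rho$.

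For that I would fix the basepoint generator $f_0=0^1 1^1\cdots n^1$ and reduce to a combinatorial statement. Since $\ker\varepsilon$ is the augmentation submodule, spanned by the differences $f-f'$ of generators, it suffices to produce for each generator $f$ an element of $k[\mathrm{Hom}_{\mathcal{I}\mathcal{F}(as)}([n],[2])]$ mapping under $\rho$ to $f-f_0$; equivalently, to build a contracting homotopy $s_{-1}\colon k\to k[\mathrm{Hom}([n],[0])]$ and $s_0\colon k[\mathrm{Hom}([n],[0])]\to k[\mathrm{Hom}([n],[2])]$ with $\varepsilon s_{-1}=\mathrm{id}$ and $\rho s_0+s_{-1}\varepsilon=\mathrm{id}$, whence any $z\in\ker\varepsilon$ satisfies $z=\rho s_0(z)$. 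Because each $g$ contributes the relation $[B_0B_1B_2]-[B_2\,\ol{B_1}\,B_0]$, the map $s_0$ can be defined by telescoping along a path, so the whole problem collapses to showing that the graph $\Gamma$ on the generators of $\mathrm{Hom}([n],[0])$, with an edge $\{\mu_2\circ g,\nu\circ g\}$ for every $g$, is connected.

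To prove connectivity I would exhibit three families of moves coming from the single relation $w=B_0B_1B_2\leftrightarrow B_2\,\ol{B_1}\,B_0$. Taking $B_1=\emptyset$ gives $UV\leftrightarrow VU$ for any prefix/suffix split, so all cyclic rotations of a word are connected. Taking a one-element middle block $B_1=\{x\}$ at the front, $x^\ell V\leftrightarrow Vx^{\ell t}$, and composing with a rotation flips the label of a single entry in place; rotating any entry to the front then shows every individual label may be toggled freely. Finally, taking $B_0=\emptyset$ and $B_1$ a two-element block gives $ab\,V\leftrightarrow V\,ba$, and rotating $V$ to the back yields $ab\,V\leftrightarrow ba\,V$, an adjacent transposition of the underlying order; rotating any adjacent pair to the front realizes every adjacent transposition. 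Since adjacent transpositions generate all orderings and the labels are independently adjustable, every generator is connected to $f_0$, so $\Gamma$ is connected and the proof is complete.

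The step I expect to be the main obstacle is this last one: the defining relation always couples the reversal of the middle block with an interchange of the two outer blocks $B_0\leftrightarrow B_2$, so no single move is purely local. The trick I would rely on is to perform all surgery at a boundary (one outer block empty) and then absorb the residual block-interchange using the rotation moves, which are themselves instances of the relation with empty middle block; checking that these rotations chain together to cancel the unwanted swap is the one place where the bookkeeping must be carried out carefully.
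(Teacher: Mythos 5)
Your proof is correct, and its overall skeleton is the same as the paper's: both verify surjectivity of $\varepsilon$ and the vanishing of $\varepsilon\circ\rho$ directly, reduce the inclusion $\mathrm{Ker}(\varepsilon)\subseteq\mathrm{Im}(\rho)$ to showing that every generator $f$ can be joined to the base generator $\mu_n$ by a finite chain of moves of the form $\mu_2\circ g \leftrightarrow \nu\circ g$, and then prove this connectivity by explicit combinatorics. Where you differ is in how the connectivity is established. You reduce to the standard generators of the hyperoctahedral group: cyclic rotations (middle block empty), in-place label toggles (singleton middle block plus a rotation), and adjacent transpositions up to label change (two-element middle block plus a rotation), and then invoke the fact that these generate the full set of $C_2$-labelled orderings. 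The paper instead runs a bespoke normalization algorithm: it first strips all $t$-labels working left to right, then sorts the underlying ordering by repeated cyclic shifts of prefixes, never isolating adjacent transpositions as such. Your route is more modular and easier to audit, since each family of moves is a one-line consequence of the defining relation and the generation statement is standard; the paper's algorithmic formulation has the practical advantage of being reusable, and indeed Lemma \ref{ideal-lem} later cites ``the algorithm of Lemma \ref{ESLem}'' verbatim. One small point of care in your write-up: the two-element-block move produces a transposition \emph{with both labels multiplied by $t$}, not a clean transposition; as you note, this is harmless because labels are independently toggleable by your second family of moves, but that dependence should be stated in the final argument rather than left implicit.
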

\begin{proof}
The augmentation map $\varepsilon$ is surjective by definition. The composite $\varepsilon\circ \rho$ is zero since the image of $\rho$ on generators consists of two morphisms with differing signs. Therefore $\mathrm{Im}(\rho)\subseteq \mathrm{Ker}(\varepsilon)$. It remains to show that $\mathrm{Ker}(\varepsilon)\subseteq \mathrm{Im}(\rho)$. 

We note that $\mathrm{Ker}(\varepsilon)$ is spanned by all differences $f - f^{\prime}$ for $f$, $f^{\prime}\in \mathrm{Hom}_{\mathcal{I}\mathcal{F}(as)}\left([n],[0]\right)$. In fact, it is spanned by all differences of the form $f-\mu_n$ for $f\in \mathrm{Hom}_{\mathcal{I}\mathcal{F}(as)}\left([n],[0]\right)$. Certainly all the differences of the form $f-\mu_n$ are in the kernel of $\varepsilon$ and we can write any difference $f-f^{\prime}$ as $\left(f-\mu_n\right)-\left(f^{\prime}-\mu_n\right)$. Therefore, in order to show that $\mathrm{Ker}(\varepsilon)$ is contained in $\mathrm{Im}(\rho)$ it suffices to show that $f-\mu_n$ is in the image of $\rho$ for each $f\in \mathrm{Hom}_{\mathcal{I}\mathcal{F}(as)}\left([n],[0]\right)$.

Consider a morphism $g\in \mathrm{Hom}_{\mathcal{I}\mathcal{F}(as)}\left([n],[2]\right)$. This map is determined by $g^{-1}(0)=X$, $g^{-1}(1)=Y$ and $g^{-1}(2)=Z$ where $X$, $Y$ and $Z$ are $C_2$-labelled, ordered sets. By definition $\rho(g)=\mu_2\circ g - \nu\circ g$ where $\left(\mu_2\circ g\right)^{-1}(0)=X<Y<Z$ and $\left(\nu\circ g\right)^{-1}(0)=Z<Y^t<X$.

In order to show that the differences $f-\mu_n$ lie in the image of $\rho$ it suffices to show that there is a finite sequence of morphisms $f_p,\dotsc ,f_0$ in $\mathrm{Hom}_{\mathcal{I}\mathcal{F}(as)}\left([n],[0]\right)$ such that $f_p=f$, $f_0=\mu_n$ and for each pair $(f_i,f_{i-1})$ there exists a $g\in \mathrm{Hom}_{\mathcal{I}\mathcal{F}(as)}\left([n],[2]\right)$ such that $\mu_2\circ g=f_i$ and $\nu\circ g=f_{i-1}$.

We construct this sequence of morphisms as follows.
\begin{enumerate}
\item Let $f\in \mathrm{Hom}_{\mathcal{I}\mathcal{F}(as)}\left([n],[0]\right)$ be defined by 
\[f^{-1}(0)=\llb i_0^{z_0}<\cdots <i_n^{z_n}\rrb.\] 
\item If $z_0=z_1=\cdots = z_n=1$ skip to Step \ref{cyc1}. Otherwise apply Steps \ref{inv1} to \ref{inv3} until this is the case.
\item \label{inv1} If $z_0=t$ we choose $g$ with 
\[X=\emptyset , \quad Y=\llb i_0^t\rrb, \quad Z=\llb i_1^{z_1}<\cdots <i_n^{z_n}\rrb.\]
\item \label{inv2} If $z_0=z_1=\cdots = z_{j-1}=1$ with $j<n$, we choose $g$ with \[X=\llb i_0^1<\cdots i_{j-1}^1\rrb , \quad Y=\llb i_j^t\rrb , \quad Z= \llb i_{j+1}^{z_{j+1}}<\cdots < i_n^{z_n}\rrb.\]
\item \label{inv3} If $z_0=z_1=\cdots z_{n-1}=1$ we choose $g$ with 
\[X=\llb i_0^1<\cdots i_{n-1}^1\rrb , \quad Y=\llb i_n^t\rrb , \quad Z=\emptyset.\]
\item \label{cyc1} By applying the steps above we have obtained a morphism, say $\beta$, in $\mathrm{Hom}_{\mathcal{I}\mathcal{F}(as)}\left([n],[0]\right)$ such that $\beta^{-1}(0)$ is of the form
\[\llb k_0^1<k_1^1<\cdots <k_n^1\rrb.\]
\item If $k_m=m$ for $0\leqslant m\leqslant n$ then we have obtained $\mu_n$ and we are done.
\item If not, by repeatedly choosing $g$ such that $Y=\emptyset$ and $Z$ is a singleton we cyclically permute the elements of the total ordering without changing any of the labels. We do this until $k_n=n$.
\item If $k_m=m$ for $0\leqslant m\leqslant n$ then we have obtained $\mu_n$ and we are done.
\item \label{hyp1} If not, suppose $k_m=m$ for $m>j$. Choose $g$ with 
\[X=\llb k_0^1<\cdots k_{j-1}^1\rrb , \quad Y=\llb k_j^1\rrb, \quad Z=\llb k_{j+1}^1<\cdots <k_n^1\rrb.\]
\item By choosing $g$ such that $Y=\emptyset$ and $Z$ is singleton, we can cyclically permute this total ordering until we obtain
\[\llb k_j^t<k_0^1<\cdots k_{j-1}^1<k_{j+1}^1<\cdots k_n^n\rrb.\]
\item Choose $g$ with $X=\emptyset$, $Y=\llb j_k^t\rrb $ and $Z=\llb  k_0^1<\cdots k_{j-1}^1<k_{j+1}^1<\cdots k_n^n\rrb$.
\item \label{hyp2} By choosing $g$ with $Y=\emptyset$ and $Z$ a singleton, we cyclically permute the resulting total ordering to get
\[\llb k_j^1< k_0^1< \cdots < k_{j-1}^1<k_{j+1}^1<\cdots k_n^n\rrb.\]
\item The result of Steps \ref{hyp1} to \ref{hyp2} is to fix $k_m$ for $m>j$ and to cyclically permute $k_0$ up to $k_j$ whilst leaving the labels unchanged. Repeating this process a finite number of times we will obtain $\mu_n$ as required. 
\end{enumerate}
This shows that all the differences of the form $f-\mu_n$ lie in the image of $\rho$ and so $\mathrm{Ker}(\varepsilon)\subseteq \mathrm{Im}(\rho)$. Therefore the sequence is exact.
\end{proof} 

Let $F\colon \mathcal{I}\mathcal{F}(as)\rightarrow \Delta H$ denote the isomorphism of \cite[Theorem 1.7]{GravesHHA}. The map $F\left(\mu_n\right)$ is the unique order-preserving map $\left(\mu_n, id_n\right)\in \mathrm{Hom}_{\Delta H}\left([n],[0]\right)$. Furthermore, the map $F\left(\nu\right)$ is the map $\left(\mu_2 , \left(1,t,1;(0\,2)\right)\right) \in \mathrm{Hom}_{\Delta H}\left([2],[0]\right)$.

\begin{cor}
\label{DHCor}
For each $n\geqslant 0 $ there is an exact sequence of $k$-modules
\[0\leftarrow k\xleftarrow{\varepsilon} k\left[\mathrm{Hom}_{\Delta H}\left([n],[0]\right)\right] \xleftarrow{\rho} k\left[\mathrm{Hom}_{\Delta H}\left([n],[2]\right)\right]\]
where $\varepsilon$ is defined by $\varepsilon\left( \left(\varphi , g\right)\right)=1_k$ on generators and extended $k$-linearly. The map $\rho$ is determined on generators by $\rho\left(\left(\psi , h\right)\right)=F\left(\mu_2\right)\circ \left(\psi , h\right) - F\left(\nu\right) \circ \left(\psi , h\right)$ and extended $k$-linearly.
\end{cor}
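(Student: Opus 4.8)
The plan is to transport the exact sequence of Lemma \ref{ESLem} across the isomorphism of categories $F\colon \mathcal{I}\mathcal{F}(as)\to \Delta H$. Because $F$ is an isomorphism, it restricts to bijections on the relevant Hom-sets,
\[\mathrm{Hom}_{\mathcal{I}\mathcal{F}(as)}([n],[0])\xrightarrow{\cong}\mathrm{Hom}_{\Delta H}([n],[0]),\qquad \mathrm{Hom}_{\mathcal{I}\mathcal{F}(as)}([n],[2])\xrightarrow{\cong}\mathrm{Hom}_{\Delta H}([n],[2]),\]
and applying the free $k$-module functor $k[-]$ yields isomorphisms of $k$-modules $k[F]$ in each degree. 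It then suffices to check that these isomorphisms intertwine the two copies of $\varepsilon$ and the two copies of $\rho$, for then the sequence of the corollary is isomorphic, as a sequence of $k$-modules, to the exact sequence of Lemma \ref{ESLem}, and is therefore itself exact.

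Compatibility of the augmentations is immediate, since both augmentation maps send every generator to $1_k$; thus the square with horizontal maps $k[F]$ and vertical maps $\varepsilon$ commutes on generators and hence everywhere. The only step requiring genuine verification is the compatibility of the maps $\rho$, and here I would use that $F$ is a functor together with the explicit identifications $F(\mu_2)=(\mu_2,id_2)$ and $F(\nu)=(\mu_2,(1,t,1;(0\,2)))$ recorded immediately before the statement. For a generator $g\in \mathrm{Hom}_{\mathcal{I}\mathcal{F}(as)}([n],[2])$, preservation of composition gives $F(\mu_2\circ g)=F(\mu_2)\circ F(g)$ and $F(\nu\circ g)=F(\nu)\circ F(g)$, whence
\[k[F]\bigl(\rho(g)\bigr)=F(\mu_2)\circ F(g)-F(\nu)\circ F(g)=\rho\bigl(F(g)\bigr),\]
the right-hand expression being precisely the map $\rho$ of the corollary evaluated on the generator $F(g)$. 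Hence the lower square commutes and the two short sequences are isomorphic.

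Since an isomorphism of sequences preserves exactness spot by spot, the exactness established in Lemma \ref{ESLem} transfers directly to the sequence over $\Delta H$. The main obstacle, such as it is, amounts only to recognizing that $F$ carries $\mu_2$ and $\nu$ to the stated morphisms and preserves composition; everything else is formal transport along an isomorphism of categories.
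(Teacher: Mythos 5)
Your proposal is correct and matches the paper's approach: the paper's proof simply states that the corollary ``follows directly from Lemma \ref{ESLem} and the isomorphism $F$ of \cite[Theorem 1.7]{GravesHHA},'' which is precisely the transport argument you carry out. Your verification that $k[F]$ intertwines the two augmentations and, via functoriality of $F$, the two maps $\rho$ just makes explicit what the paper leaves implicit.
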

\begin{proof}
This follows directly from Lemma \ref{ESLem} and \cite[Theorem 1.7]{GravesHHA}.
\end{proof}

\begin{cor}
\label{PRCor}
There is an exact sequence of right $\Delta H$-modules 
\[0\leftarrow k^{\star} \xleftarrow{\varepsilon}  k\left[\mathrm{Hom}_{\Delta H}\left(-,[0]\right)\right] \xleftarrow{\rho} k\left[\mathrm{Hom}_{\Delta H}\left(-,[2]\right)\right]\]
where $\varepsilon$ and $\rho$ are the natural transformations formed from the $k$-module maps of Corollary \ref{DHCor}.
\end{cor}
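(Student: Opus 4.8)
The plan is to deduce the statement from Corollary \ref{DHCor} by checking that the pointwise maps there assemble into natural transformations and that exactness of a sequence of right $\Delta H$-modules may be verified one object at a time. First I would confirm that the three terms are genuinely right $\Delta H$-modules, i.e.\ objects of $\mathbf{Mod}\Delta H = \mathrm{Fun}(\Delta H^{op}, \mathbf{Mod}_k)$: the term $k^{\star}$ is the $k$-constant right $\Delta H$-module of Definition \ref{triv-c-mod}, while $\mathrm{Hom}_{\Delta H}(-,[0])$ and $\mathrm{Hom}_{\Delta H}(-,[2])$ are contravariant in the first slot via precomposition, so applying the free $k$-module functor levelwise yields right $\Delta H$-modules whose value at $[n]$ is exactly the $k$-module appearing in Corollary \ref{DHCor}.

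Next I would verify that $\varepsilon$ and $\rho$ are morphisms of right $\Delta H$-modules, i.e.\ natural transformations, noting that both are given by a \emph{uniform} formula independent of $n$. For $\varepsilon$ this is immediate: it sends every generator to $1_k$ and $k^{\star}$ has identity structure maps, so for any $\alpha \in \mathrm{Hom}_{\Delta H}([n'],[n])$ the precomposition-by-$\alpha$ structure map followed by $\varepsilon$ agrees with $\varepsilon$ followed by the identity, both carrying a generator to $1_k$. For $\rho$ the key observation is that it is post-composition with the fixed difference $F(\mu_2)-F(\nu)$ of morphisms $[2]\to[0]$. Post-composition with a fixed morphism commutes with the precomposition structure maps by associativity of composition in $\Delta H$, so each of $F(\mu_2)\circ(-)$ and $F(\nu)\circ(-)$ is a natural transformation of representable functors (this is contravariant Yoneda), and $\rho$ is their $k$-linear difference, hence itself natural.

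Finally I would invoke the fact, implicit in the abelian structure of $\mathbf{Mod}\Delta H$ recalled after Definition \ref{triv-c-mod}, that kernels, images and cokernels in this functor category are computed objectwise. Consequently a sequence of right $\Delta H$-modules is exact if and only if it is exact after evaluation at each object $[n]$. Evaluating the proposed sequence at $[n]$ recovers precisely the exact sequence of Corollary \ref{DHCor}, so exactness follows at once.

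I expect essentially no serious obstacle here: the genuinely difficult combinatorial work is already carried out in Lemma \ref{ESLem}, and this corollary is the routine globalization of it. The only point requiring any care is the naturality of $\rho$, and even that reduces to associativity of composition in $\Delta H$; everything else follows formally from the objectwise computation of exactness in a functor category.
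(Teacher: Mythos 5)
Your proposal is correct and follows the same route as the paper: the paper's proof is exactly your final step, namely that exactness of a sequence of right $\Delta H$-modules is checked object-wise, so the claim follows directly from Corollary \ref{DHCor}. Your additional verification that $\varepsilon$ and $\rho$ are genuinely natural transformations (via the Yoneda-style observation that post-composition with the fixed difference $F(\mu_2)-F(\nu)$ commutes with precomposition) is a detail the paper leaves implicit, but it does not change the substance of the argument.
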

\begin{proof}
The exactness of a sequence of functors is checked object-wise, so this follows directly from Corollary \ref{DHCor}.
\end{proof}

\begin{cor}
\label{HO-Cor-1}
Let $A$ be a unital, involutive $k$-algebra. The homology of the partial chain complex
\[0\leftarrow A \xleftarrow{d} A^{\otimes 3}\]
where $d(a_0\otimes a_1 \otimes a_2) = a_0a_1a_2-a_2\ol{a_1}a_0$ is isomorphic to $HO_{0}(A)$.
\end{cor}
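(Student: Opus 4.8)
The plan is to compute $HO_0(A)=\mathrm{Tor}_0^{\Delta H}(k^\star,\mathsf{H}_A)$ directly from the partial projective resolution of $k^\star$ provided by Corollary \ref{PRCor}, by tensoring that resolution with the hyperoctahedral bar construction $\mathsf{H}_A$ and identifying the resulting map with $d$. The guiding principle is that since the left derived functors $\mathrm{Tor}_\star^{\Delta H}$ are computed from $-\otimes_{\Delta H}-$, which is right exact, the zeroth one is the functor itself, so $HO_0(A)=k^\star\otimes_{\Delta H}\mathsf{H}_A$, and this can be read off from the right end of any resolution of $k^\star$.

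First I would record the representable (co-Yoneda) identification. For each object $[n]$ there is a natural isomorphism
\[k\left[\mathrm{Hom}_{\Delta H}\left(-,[n]\right)\right]\otimes_{\Delta H}\mathsf{H}_A\;\cong\;\mathsf{H}_A([n])=A^{\otimes n+1},\]
sending a class $\left[\alpha\otimes x\right]$ with $\alpha\in\mathrm{Hom}_{\Delta H}(D,[n])$ and $x\in\mathsf{H}_A(D)$ to $\mathsf{H}_A(\alpha)(x)$, with inverse $x\mapsto\left[\mathrm{id}_{[n]}\otimes x\right]$. This is immediate from Definition \ref{tensor-obj}: well-definedness uses the defining relations of the tensor product, and the two composites are mutually inverse using $\mathsf{H}_A(\mathrm{id})=\mathrm{id}$ and the relation applied to $\alpha$ itself. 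In particular the two representables appearing in Corollary \ref{PRCor} tensor to $A$ and to $A^{\otimes 3}$.

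Next I would apply the right exact functor $-\otimes_{\Delta H}\mathsf{H}_A$ to the exact sequence of Corollary \ref{PRCor}. Right exactness turns the exact sequence $k[\mathrm{Hom}_{\Delta H}(-,[2])]\xrightarrow{\rho}k[\mathrm{Hom}_{\Delta H}(-,[0])]\xrightarrow{\varepsilon}k^\star\to 0$ into an exact sequence
\[A^{\otimes 3}\xrightarrow{\rho\otimes\mathrm{id}}A\longrightarrow k^\star\otimes_{\Delta H}\mathsf{H}_A\longrightarrow 0,\]
so that $HO_0(A)=k^\star\otimes_{\Delta H}\mathsf{H}_A\cong\mathrm{coker}(\rho\otimes\mathrm{id})$. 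Since the degree-zero homology of the partial complex $0\leftarrow A\xleftarrow{d}A^{\otimes 3}$ is exactly $\mathrm{coker}(d)$, the corollary reduces to verifying that, under the identifications above, the induced map $\rho\otimes\mathrm{id}$ coincides with $d$.

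That final identification is the heart of the matter and the step I expect to require the most care. A generator $a_0\otimes a_1\otimes a_2\in A^{\otimes 3}$ corresponds to $[\mathrm{id}_{[2]}\otimes(a_0\otimes a_1\otimes a_2)]$; by the formula $\rho((\psi,h))=F(\mu_2)\circ(\psi,h)-F(\nu)\circ(\psi,h)$ of Corollary \ref{DHCor}, applying $\rho$ to $\mathrm{id}_{[2]}$ gives $F(\mu_2)-F(\nu)$, which the target isomorphism sends to $\mathsf{H}_A(F(\mu_2))(a_0\otimes a_1\otimes a_2)-\mathsf{H}_A(F(\nu))(a_0\otimes a_1\otimes a_2)$. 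I would then evaluate each term from the definition of $\mathsf{H}_A$: since $F(\mu_2)=(\mu_2,\mathrm{id})$ is the order-preserving collapse with trivial $C_2$-labels, the first term is the ordered product $a_0a_1a_2$; since $F(\nu)=(\mu_2,(1,t,1;(0\,2)))$ corresponds via $\mathcal{I}\mathcal{F}(as)$ to the $C_2$-labelled ordered set $\{2^1<1^t<0^1\}$, the reversal of order together with the $t$-label on the middle entry produces $a_2\overline{a_1}a_0$. Hence $\rho\otimes\mathrm{id}$ sends $a_0\otimes a_1\otimes a_2$ to $a_0a_1a_2-a_2\overline{a_1}a_0=d(a_0\otimes a_1\otimes a_2)$, so $HO_0(A)\cong\mathrm{coker}(d)$, as required. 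The only genuine obstacle is bookkeeping the $C_2$-labels and orderings correctly in evaluating $\mathsf{H}_A(F(\mu_2))$ and $\mathsf{H}_A(F(\nu))$; the remainder of the argument is formal.
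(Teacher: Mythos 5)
Your proposal is correct and follows exactly the paper's approach: the paper's proof is simply to apply $-\otimes_{\Delta H}\mathsf{H}_A$ to the partial resolution of Corollary \ref{PRCor} and take homology. Your write-up supplies the details the paper leaves implicit (the co-Yoneda identification of the tensored representables with $A$ and $A^{\otimes 3}$, right exactness giving $HO_0(A)\cong\mathrm{coker}(\rho\otimes\mathrm{id})$, and the verification that $\rho\otimes\mathrm{id}$ is $d$), all of which are accurate.
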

\begin{proof}
We apply the functor $-\otimes_{\Delta H} \mathsf{H}_A$ to the partial resolution of $k^{\star}$ in Corollary \ref{PRCor} and take homology.
\end{proof}

\begin{lem}
\label{ideal-lem}
The $k$-submodule $k\left[\llb a_0a_1a_2-a_2\ol{a_1}a_0\rrb\right]$ of $A$ is, in fact, an ideal of $A$.
\end{lem}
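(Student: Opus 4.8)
Write $M = k[\{a_0a_1a_2 - a_2\ol{a_1}a_0\}]$ for the submodule in question, and abbreviate its generators by $g(a_0,a_1,a_2) = a_0a_1a_2 - a_2\ol{a_1}a_0$. The plan is to verify that $M$ absorbs left and right multiplication by an arbitrary $b\in A$; since $M$ is the $k$-span of the $g(a_0,a_1,a_2)$, it suffices to test this on the generators. The first simplification is to note that $M$ is closed under the involution: expanding shows $\ol{g(a_0,a_1,a_2)} = g(\ol{a_2},\ol{a_1},\ol{a_0})$, so $\ol M = M$. Because $g(a_0,a_1,a_2)\,b = \ol{\,\ol b\cdot\ol{g(a_0,a_1,a_2)}\,}$, the right ideal property follows from the left ideal property applied to $\ol b$ and the generator $\ol{g(a_0,a_1,a_2)}\in M$. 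Hence it is enough to prove $b\cdot g(a_0,a_1,a_2)\in M$ for all $b$.

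Before the main computation I would record the elementary members of $M$ obtained by specialising a slot to the unit. The middle slot gives commutators, $g(a_0,1,a_2) = a_0a_2 - a_2a_0$, and the combination
\[
g(s,1,u) + g(u,s,1) = (s - \ol s)\,u
\]
shows that every \emph{involution defect} $(s-\ol s)u$ lies in $M$. These defect elements are the only auxiliary members the argument needs.

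The crux is a single explicit identity writing $b\cdot g(a_0,a_1,a_2)$ as a combination of generators. The device is to feed $b$ into the \emph{middle} slot of generators whose first slot is the unit, exploiting that $\ol{\,\cdot\,}$ reverses products. Writing $c = a_0a_1\ol{a_2}$, a routine expansion (using $\ol{xy} = \ol y\,\ol x$ and $\ol{\ol x} = x$) gives
\[
b\cdot g(a_0,a_1,a_2) = g(1,\,ba_0a_1,\,a_2) - g(1,\,ba_2\ol{a_1},\,a_0) + (\ol c - c)\,\ol b .
\]
The first two summands are generators and the last is the involution defect $(\ol c - c)\ol b$, which lies in $M$ by the previous paragraph. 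Thus $b\cdot g(a_0,a_1,a_2)\in M$, and together with the involution symmetry this shows $M$ is a two-sided ideal.

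The step I expect to be the genuine obstacle is arranging the residual term so that it is visibly in $M$. The obvious reductions --- absorbing $b$ into the first or third slot --- leave a residual of the shape $a_2\ol{a_1}ba_0 - a_0a_1ba_2$, and every attempt to tidy it by transposing factors or sliding an involution past a product covertly multiplies a known element of $M$ by a ring element on one side, which is precisely the ideal property under proof; the manipulation then closes into a tautology. Placing the unit in the first slot and routing $b$ through the middle slot is what breaks this circularity: the order-reversal of the involution fuses the two stray monomials into the single left-anchored defect $(\ol c - c)\ol b$, to which the relation $(s-\ol s)u\in M$ applies directly. Once the displayed identity is in hand, the remainder is bookkeeping.
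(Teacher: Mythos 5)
Your proof is correct, but it takes a genuinely different route from the paper's. The paper deduces the lemma from the combinatorial algorithm already established in Lemma \ref{ESLem}: applying that algorithm with $n=3$ shows that, modulo the submodule, any product of four elements $a_0a_1a_2a_3$ equals any product obtained by applying the involution to a subset of the factors and permuting them; absorption under left and right multiplication by $b$ is then the special case where $b$ is one of the four factors. Your argument is instead a self-contained algebraic verification: closure of the submodule under the involution (since $\ol{g(a_0,a_1,a_2)}=g(\ol{a_2},\ol{a_1},\ol{a_0})$), reduction of right multiplication to left multiplication via $g\cdot b=\ol{\ol{b}\cdot\ol{g}}$, the auxiliary family $(s-\ol{s})u=g(s,1,u)+g(u,s,1)\in M$, and the explicit identity
\[
b\cdot g(a_0,a_1,a_2) = g(1,\,ba_0a_1,\,a_2) - g(1,\,ba_2\ol{a_1},\,a_0) + (\ol{c}-c)\,\ol{b},
\qquad c=a_0a_1\ol{a_2},
\]
all of which I have checked by direct expansion. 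What each approach buys: yours is elementary and independent of the earlier machinery, so it could be read in isolation and makes the ideal property transparent at the level of ring identities; the paper's is essentially a one-line corollary of work already done, and it proves something stronger along the way (the full permutation-and-involution invariance of four-fold products modulo $M$), which is the form of the statement that meshes with the functor-homology description of $HO_0$.
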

\begin{proof}
We observe that given a product of four elements $a_0a_1a_2a_3$ we can use the relation to obtain any product of four elements given by applying the involution to any subset of the four factors, then permuting the four factors. This follows from the algorithm of Lemma \ref{ESLem} by showing that we can use the relation to obtain $\mu_3$ from any map in $\mathrm{Hom}_{\mathcal{I}\mathcal{F}(as)}\left([3],[0]\right)$.
\end{proof}

\begin{thm}
\label{HOTHM}
Let $A$ be a unital, involutive $k$-algebra. There is an isomorphism
\[HO_0(A)\cong \frac{A}{(a_0a_1a_2-a_2\ol{a_1}a_0)}.\]
In particular, $HO_0(A)$ has a ring structure induced from the ring structure of $A$.
\end{thm}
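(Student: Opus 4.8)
The plan is to assemble the statement directly from Corollary \ref{HO-Cor-1} and Lemma \ref{ideal-lem}, which between them carry essentially all of the content. First I would recall that Corollary \ref{HO-Cor-1} identifies $HO_0(A)$ with the degree-zero homology of the two-term complex $0\leftarrow A \xleftarrow{d} A^{\otimes 3}$, where $d(a_0\otimes a_1\otimes a_2)=a_0a_1a_2-a_2\ol{a_1}a_0$. Since $A$ sits in homological degree zero with zero outgoing differential, this homology is simply the cokernel $A/\mathrm{Im}(d)$. At this stage the isomorphism $HO_0(A)\cong A/\mathrm{Im}(d)$ is only an isomorphism of $k$-modules.

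Next I would identify $\mathrm{Im}(d)$ explicitly. By definition of $d$, its image is the $k$-submodule of $A$ spanned $k$-linearly by all elements of the form $a_0a_1a_2-a_2\ol{a_1}a_0$; that is, $\mathrm{Im}(d)=k\left[\llb a_0a_1a_2-a_2\ol{a_1}a_0\rrb\right]$, which is precisely the submodule appearing in Lemma \ref{ideal-lem}. To upgrade the $k$-module isomorphism to the asserted ring statement I would then invoke Lemma \ref{ideal-lem}, which guarantees that this $k$-submodule is in fact a two-sided ideal of $A$. Consequently the ideal $(a_0a_1a_2-a_2\ol{a_1}a_0)$ generated by these elements coincides with $\mathrm{Im}(d)$, so the cokernel $A/\mathrm{Im}(d)$ is exactly the quotient ring $A/(a_0a_1a_2-a_2\ol{a_1}a_0)$. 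Transporting this ring structure along the isomorphism $HO_0(A)\cong A/\mathrm{Im}(d)$ then endows $HO_0(A)$ with the claimed ring structure, finishing the argument.

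The only conceptually nontrivial point — and the reason the theorem is not completely immediate from Corollary \ref{HO-Cor-1} — is that the cokernel of an arbitrary $k$-linear map is a priori only a $k$-module; the ring structure materializes solely because the defining relations $a_0a_1a_2-a_2\ol{a_1}a_0$ already $k$-span an ideal rather than merely a submodule. I therefore expect the genuine obstacle to have been discharged in the proof of Lemma \ref{ideal-lem}, whose combinatorial input comes from the algorithm of Lemma \ref{ESLem} for reducing an arbitrary monomial to the standard form $\mu_n$; once that is in hand, the present proof is essentially a packaging step that records the $k$-module isomorphism and observes that it respects multiplication.
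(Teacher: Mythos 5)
Your proposal is correct and follows exactly the paper's own route: the paper's proof of Theorem \ref{HOTHM} is precisely the observation that Corollary \ref{HO-Cor-1} and Lemma \ref{ideal-lem} combine as you describe, with the genuine work having been done in Lemma \ref{ideal-lem} (via the algorithm of Lemma \ref{ESLem}). Your write-up simply makes explicit the packaging steps --- identifying the degree-zero homology with the cokernel $A/\mathrm{Im}(d)$, noting $\mathrm{Im}(d)$ is the $k$-span of the relations, and using the ideal property to transport the ring structure --- which the paper leaves implicit.
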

\begin{proof}
This follows from Corollary \ref{HO-Cor-1} and Lemma \ref{ideal-lem}.
\end{proof}

\begin{rem}
We noted in the introduction that hyperoctahedral homology has the remarkable property that it has a graded-commutative product structure even when the algebra under consideration is not commutative. We see this in action with this theorem. By taking $a_1$ to be the multiplicative identity $1_A$, we recover the ideal generated by the commutator submodule of $A$. This appears in degree zero symmetric homology \cite[Theorem 86]{Ault}.
\end{rem}

\begin{cor}
\label{zero-cor}
Let $A$ be a unital, involutive $k$-algebra.
\begin{enumerate} 
\item \label{comm-triv} If $A$ is a commutative algebra with the trivial involution then $HO_0(A)\cong A$.
\item Let $A$ be a commutative algebra with a non-trivial involution. In this case $HO_0(A)$ is isomorphic to the coinvariants of $A$ under the involution.
\item \label{triv-cor} Let $A$ be a unital involutive $k$-algebra. If the ideal $\left(a_0a_1a_2-a_2\ol{a_1}a_0\right)$ is equal to $A$, then $HO_{\star}(A)$ is trivial in all degrees.
\end{enumerate}
\end{cor}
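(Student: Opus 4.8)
The plan is to treat the three parts separately, deriving parts (1) and (2) as direct specializations of the degree-zero computation in Theorem \ref{HOTHM}, and deriving part (3) from the unital ring structure supplied by Corollary \ref{Main-cor}. In each case the starting point is the isomorphism $HO_0(A)\cong A/(a_0a_1a_2-a_2\ol{a_1}a_0)$, so the work is simply to identify the defining ideal under the given hypotheses.

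For part (1), with the trivial involution we have $\ol{a_1}=a_1$, so the generating relation becomes $a_0a_1a_2-a_2a_1a_0$; since $A$ is commutative this is identically zero. Hence the ideal is zero and $HO_0(A)\cong A$. For part (2), commutativity lets me rewrite the generator as $a_0a_1a_2-a_2\ol{a_1}a_0=a_0a_2(a_1-\ol{a_1})$. Taking $a_0=a_2=1_A$ shows that every element $a-\ol a$ lies in the ideal, and conversely every generator is a multiple of such an element, so the ideal coincides with the one generated by $\llb a-\ol a : a\in A\rrb$. Thus $HO_0(A)$ is the largest quotient of $A$ on which the involution acts trivially, i.e. the coinvariants of $A$ under the involution.

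For part (3), if the ideal $(a_0a_1a_2-a_2\ol{a_1}a_0)$ equals $A$ then Theorem \ref{HOTHM} gives $HO_0(A)\cong A/A=0$ immediately. To upgrade this to the vanishing of $HO_{\star}(A)$ in \emph{all} degrees I would invoke Corollary \ref{Main-cor}(2): the Pontryagin product makes $HO_{\star}(A)$ a unital associative graded-commutative algebra whose multiplicative unit lies in degree zero. Since $HO_0(A)=0$, this unit is $0$, and for any homogeneous class $x$ we get $x=1\cdot x=0\cdot x=0$; hence $HO_{\star}(A)$ is trivial in every degree.

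The routine computations are parts (1) and (2), which are purely a matter of simplifying the relation. The main obstacle is part (3): the degree-zero isomorphism alone only yields $HO_0(A)=0$, and one genuinely needs the unital graded-ring structure coming from the Pontryagin product to propagate this vanishing into all positive degrees. The key point to verify carefully is that the unit of this ring is indeed supported in degree zero, so that its vanishing forces the entire graded algebra to collapse.
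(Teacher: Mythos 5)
Your proposal is correct and follows essentially the same route as the paper: parts (1) and (2) are read off from Theorem \ref{HOTHM} by simplifying the defining ideal (in (2) identifying it with the ideal generated by $\llb a-\ol{a} : a\in A\rrb$, i.e.\ the largest quotient algebra with trivial involution), and part (3) uses the unital graded-commutative Pontryagin product of Corollary \ref{Main-cor}, exactly as the paper does via the maps $HO_0(A)\otimes HO_q(A)\rightarrow HO_q(A)$, $1\otimes x\mapsto x$, so that the vanishing of $HO_0(A)$ forces $x=1\cdot x=0$ in every degree.
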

\begin{proof}
The first two statements follow directly from Theorem \ref{HOTHM}. The third statement follows from Theorem \ref{HOTHM} and Corollary \ref{Main-cor}. In particular, since we have a unital, associative, graded-commutative product we have have surjective maps $HO_0(A)\otimes HO_q(A)\rightarrow HO_q(A)$ given by $1\otimes x \mapsto x$ for all $q\geqslant 0$. However, if the ideal $\left(a_0a_1a_2-a_2\ol{a_1}a_0\right)$ is equal to $A$, $HO_0(A)$ is trivial and so $HO_{\star}(A)$ is trivial in all degrees.
\end{proof}

\begin{cor}
Hyperoctahedral homology fails to preserve Morita equivalence.
\end{cor}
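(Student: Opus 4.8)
The plan is to produce two involutive $k$-algebras that are Morita equivalent but have non-isomorphic hyperoctahedral homology. The cleanest choice is the ground ring $k$ itself, with the trivial involution, and the matrix algebra $M_n(k)$ for some $n\geqslant 2$, equipped with any unital involution (for example the transpose). These underlying algebras are Morita equivalent for every commutative ring $k$, since $M_n(k)$ is the endomorphism ring of the progenerator $k^n$. I would then compute both sides and show that they already disagree in degree zero.

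First I would record that $HO_{\star}(k)$ is nonzero. Since $k$ is commutative with the trivial involution, Corollary \ref{zero-cor}(\ref{comm-triv}) gives $HO_0(k)\cong k$, which is nonzero whenever $k$ is.

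The key step is to show that $HO_{\star}(M_n(k))$ vanishes. By Corollary \ref{zero-cor}(\ref{triv-cor}) it is enough to prove that the ideal $(a_0a_1a_2-a_2\ol{a_1}a_0)$ is all of $M_n(k)$. Setting $a_1=1_A$ and using that any unital involution fixes the unit, so $\ol{1_A}=1_A$, the defining relation specialises to the commutator $a_0a_2-a_2a_0$; hence the ideal contains every additive commutator in $M_n(k)$. A short computation with matrix units then finishes the job: $E_{12}=[E_{11},E_{12}]$ is a commutator, and the two-sided ideal it generates contains $E_{i1}E_{12}E_{2j}=E_{ij}$ for all $i,j$, so it is the whole of $M_n(k)$. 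Therefore the ideal is everything and $HO_{\star}(M_n(k))=0$.

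Combining these observations, $HO_{\star}(k)\neq 0 = HO_{\star}(M_n(k))$ while $k$ and $M_n(k)$ are Morita equivalent, so hyperoctahedral homology does not preserve Morita equivalence. I expect the only delicate point to be the interpretation of ``Morita equivalence'': here I use the ordinary equivalence of module categories of the underlying $k$-algebras, so it is worth stating explicitly that this already defeats Morita invariance, independently of any finer (Hermitian) notion of equivalence for involutive algebras. The ring-theoretic verification that the commutator ideal exhausts $M_n(k)$ is entirely routine, and the essential input is really the contrast between quotienting by the commutator \emph{submodule} (as in Hochschild homology, which is Morita invariant) and quotienting by the commutator \emph{ideal}, which is forced upon us by Theorem \ref{HOTHM}.
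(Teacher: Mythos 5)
Your proposal is correct and follows essentially the same route as the paper: the paper takes a commutative algebra $A$ with trivial involution, shows $HO_0(A)\cong A$ by Corollary \ref{zero-cor}(\ref{comm-triv}), and kills $HO_{\star}(M_n(A))$ via Corollary \ref{zero-cor}(\ref{triv-cor}) by setting $a_1$ equal to the identity matrix so that the ideal contains all commutators and hence equals $M_n(A)$. Your only deviations are the harmless specialization $A=k$ and the welcome extra detail (the matrix-unit computation $E_{ij}=E_{i1}E_{12}E_{2j}$ and the remark that $\ol{1_A}=1_A$) that the paper leaves implicit.
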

\begin{proof}
Let $A$ be a commutative algebra with trivial involution. Consider the involutive algebra of $(n\times n)$-matrices, $M_n(A)$, with $n>1$. By taking $a_1$ to be the identity matrix we see that the ideal $\left(a_0a_1a_2-a_2\ol{a_1}a_0\right)=M_n(A)$. By Statement (\ref{triv-cor}) of Corollary \ref{zero-cor}, the hyperoctahedral homology of $M_n(A)$ is trivial in all degrees. However, by Statement (\ref{comm-triv}) of Corollary \ref{zero-cor} $HO_0(A)\cong A$. Therefore, hyperoctahedral homology does not preserve Morita equivalence.
\end{proof}

\begin{prop}
When restricted to $HO_0(A)\otimes HO_0(A)\rightarrow HO_0(A)$, the Pontryagin product of Corollary \ref{Main-cor} is the algebra multiplication map in 
\[\frac{A}{\left(a_0a_1a_2-a_2\ol{a_1}a_0\right)}.\]
\end{prop}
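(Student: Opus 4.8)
The plan is to unwind the Pontryagin product of Corollary \ref{Main-cor} in degree zero and to match it against the description of $HO_0(A)$ from Theorem \ref{HOTHM}. Recall that the product is the composite
\[
C_{\star}\left(\DHP , \HBP\right)^{\otimes 2}\hookrightarrow \mathcal{D}_{\mathbf{Mod}}(2)\otimes_{\Sigma_2} C_{\star}\left(\DHP , \HBP\right)^{\otimes 2}\xrightarrow{\nu} C_{\star}\left(\DHP , \HBP\right),
\]
where the inclusion sends $x\otimes y$ to $\left[\ast \otimes x\otimes y\right]$ along the basepoint $\ast$ given by the identity object of $\mathbb{E}\Sigma_2$, and $\nu$ is the structure map of Definition \ref{structure-map-defn}. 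Under the isomorphism of Theorem \ref{HOTHM}, a class in $HO_0(A)$ is represented in $C_0\left(\DHP,\HBP\right)$ by an element $a\in \HBP([0])=A$ lying in the summand indexed by the object $[0]$, and I would take such representatives $a,b$ for the two factors.

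First I would identify $\nu$ in degree zero. By construction $\nu$ is obtained from the left operad module maps $\lambda$ of Definition \ref{lambda-defn} by applying the nerve and free $k$-module functors and passing through the isomorphism $E$ of Lemma \ref{iso-cat-lem} and the quotient $q$ of Definition \ref{Quotient-defn}. The commutative diagram of Lemma \ref{module-lem} identifies $\lambda$, via $E$, with the $\mathcal{D}_{\mathbf{Cat}}$-algebra structure map $\theta$ on the tuple category $T(F(X))$; by Lemma \ref{tuple-cat-lem} this map is concatenation of tuples, inherited from the disjoint-union monoidal structure of $\DHP$. Specialising the indeterminates of $X$ to elements of $A$, the upshot is that in degree zero the product carries the pair of $0$-chains $a\in \HBP([0])$ and $b\in \HBP([0])$ to the concatenated tuple $a\otimes b\in \HBP([1])=A^{\otimes 2}$, regarded as a $0$-chain in the summand indexed by $[1]$; the trivial permutation $\ast$ guarantees that no reordering or application of the involution occurs.

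It then remains to pass to homology. The $0$-chain $a\otimes b$ indexed by $[1]$ is joined, through the order-preserving multiplication morphism $F(\mu_1)\in \mathrm{Hom}_{\DHP}([1],[0])$, to the $0$-chain $\HBP\left(F(\mu_1)\right)(a\otimes b)=ab\in \HBP([0])=A$: indeed $\partial_0-\partial_1$ applied to the $1$-chain $\left(F(\mu_1),a\otimes b\right)$ realises precisely this identification, so in $HO_0(A)$ one has $\left[a\otimes b\right]_{[1]}=\left[ab\right]_{[0]}$. Under the isomorphism of Theorem \ref{HOTHM} the right-hand class is sent to $ab$ modulo the ideal $\left(a_0a_1a_2-a_2\ol{a_1}a_0\right)$. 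Hence $[a]\cdot[b]=[ab]$, which is exactly the multiplication of the quotient algebra.

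The main obstacle is the second step: making rigorous that the operad structure map $\nu$ restricts, in degree zero, to concatenation of tuples. This requires carefully tracking the chain of identifications---the isomorphism $E$, the specialisation of $X$ to $A$, and the quotient $q$---and verifying that the degree-zero part of $\mathcal{D}_{\mathbf{Mod}}(2)$ collapses to the single basepoint once the coinvariants $\otimes_{\Sigma_2}$ are taken, so that no higher operadic data contributes. Once concatenation is established, the homological identification via $F(\mu_1)$ is routine.
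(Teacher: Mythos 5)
Your proposal is correct and follows essentially the same route as the paper: the paper's (very terse) proof likewise represents degree-zero classes by elements $id_{[0]}\otimes a$ with $a\in \mathsf{H}_{A+}([0])=A$ and then observes that the $\mathcal{D}_{\mathbf{Mod}}$-algebra structure map reduces to the algebra multiplication. Your write-up simply fills in the details the paper leaves implicit, namely that $\nu$ in degree zero is concatenation into the summand indexed by $[1]$ (with the identity of $\Sigma_2$ contributing no permutation or involution) and that the $1$-chain $\left(F(\mu_1), a\otimes b\right)$ identifies this class with $ab$ indexed by $[0]$ in homology.
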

\begin{proof}
Using the degree zero and degree one terms of the chain complex $C_{\star}\left( \Delta H_{+}, \mathsf{H}_{A+}\right)$ one can show that $HO_0(A)$ is generated $k$-linearly by equivalence classes of the form $id_{[0]}\otimes a$. One then observes that the Pontryagin product structure of Corollary \ref{Main-cor} is the algebra multiplication as required.
\end{proof}

\bibliographystyle{alpha}
\bibliography{HOHomOps-refs}

\end{document}